 \theoremstyle{plain}
 \newtheorem{theorem}{Теорема}
 \newtheorem{lemma}{Лемма}
 \newtheorem{corollary}{Следствие}
 \newtoks\thehProclaim
 \theoremstyle{definition}
\theoremstyle{plain}
\newtoks\thehProclaim
\newtheorem*{Proclaim}{\the\thehProclaim}
\newenvironment{proclaim}[1]{\thehProclaim{#1}\begin{Proclaim}}{\end{Proclaim}}
\theoremstyle{definition}
\newtoks{\thehRemark}
\newtheorem*{Remark}{\the\thehRemark}
\begin{document}

 \begin{center}
    {\Large\bf О финитной отделимости конечно порожденных ассоциативных колец}
 \end{center}
 \begin{center}\vspace{2mm}
     {\bf\small\copyright\ \ Станислав Кублановский}
 \end{center}\vspace{2mm}

    {\footnotesize Установлены необходимые и достаточные условия финитной отделимости моногенных колец. В качестве следствия доказано, что конечно порожденное PI-кольцо без кручения является финитно отделимым в том и только в том случае, когда его аддитивная группа конечно порождена.}

 \address{ТПО « Северный Очаг»\\
 Россия}
 \email{stas1107@mail. ru}

\section{Введение}
Понятие финитной аппроксимируемости и финитной отделимости в алгебраических системах вызывает постоянный интерес исследователей. Одной из причин этого интереса является связь с алгоритмическими проблемами. На эту связь указал еще академик А. И. Мальцев в работе 1958 \cite{mal}, а именно: в финитно аппроксимируемых (финитно отделимых) конечно определенных системах разрешима проблема равенства (проблема вхождения). Все это относится и к традиционным алгебраическим системам: полугруппам, группам и кольцам.

Напомним, что алгебраическая система $A$ называется финитно отделимой если для любого ее элемента $a$ и для любой подсистемы $ A^{ '}$ такой, что $a\not\in A'$, существует конечная система $F$ гомоморфизм $\varphi:A\rightarrow F$ такой, что $\varphi \left({a}\right)\not\in \varphi \left({A'}\right)$

Алгебраическая система $A$ называется финитно аппроксимируемой, если для любых ее двух различных элементов $a, b$ существует конечная система $F$ и гомоморфизм $\varphi:A\rightarrow F$ такой, что $\varphi \left({a}\right)\ne \varphi \left({b}\right)$.

Свойство финитной отделимости хорошо изучено в группах и полугруппах. Но для колец многие вопросы в этой тематике еще далеки от разрешения. Из кольцевых результатов в этой области стоит отметить работу О. Б. Пайсон, М. В. Волкова, М. В. Сапира 1999 \cite{pai}, в которой, в частности, описываются многообразия ассоциативных колец, все кольца которых (все конечно порожденные кольца) финитно отделимы.

Одним из открытых принципиальных вопросов здесь оставался критерий финитной отделимости моногенных колец (напомним, что моногенным называют кольцо, порожденное одним элементом). Разрешение этого вопроса являлось бы отправной точкой для исследований. Оказалось, что в отличие от групп и полугрупп, в кольцах дело обстоит значительно сложнее. Все изложенное в настоящей работе посвящено доказательству сформулированного автором здесь критерия.
\begin{theorem}
1. Для того, чтобы моногенное кольцо $Z\left<{\left. {a}\right>}\right. $ было финитно отделимым необходимо и достаточно, чтобы для некоторых целых чисел \\ $k, n, k_{1}, k_{2}, \dots, k_{n-1}$ ($n>0$) выполнялось равенство \begin{equation*}
 k\left({a^{n}+k_{1}a^{n-1}+k_{2}a^{n-2} + \dots +k_{n-1}a}\right)=0,
\end{equation*} причем $k$ --- положительное целое свободное от квадратов (то есть $k$ --- произведение различных простых чисел или $k=1$ ).

2. Пусть моногенное кольцо $Z\left<{\left. {a}\right>}\right. $ задано конечным набором определяющих соотношений $f_{1}\left({a}\right)=0, f_{2}\left({a}\right)=0, \dots, f_{m}\left({a}\right)=0$, где $f_{i}$ --- многочлены от одного переменного с целыми коэффициентами без свободных членов. Для того, чтобы моногенное кольцо $Z\left<{\left. {a}\right>}\right. $ было финитно отделимым, необходимо и достаточно, чтобы выполнялось два условия:

(i) Н.О.Д коэффициентов всех многочленов $f_{i}$ (вместе взятых) был свободен от квадратов;

(ii) унитарный многочлен, являющийся Н.О.Дом всех многочленов $f_{i}$ над полем рациональных чисел, имеет целые коэффициенты.
\end{theorem}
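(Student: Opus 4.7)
The plan is to prove Part~1 first and then to deduce Part~2 from it by translating the polynomial relation in Part~1 into ideal-theoretic conditions on $J=(f_1,\dots,f_m)\subset\mathbb{Z}[x]$, noting that $\mathbb{Z}\langle a\rangle=x\mathbb{Z}[x]/J$. Write $\tilde g$ for the monic generator of $J\mathbb{Q}[x]$ and $k_0$ for the content of $J$. I would check that (ii) is precisely the statement $\tilde g\in\mathbb{Z}[x]$---by Gauss's lemma applied to a monic integer multiple of $\tilde g$ lying in $J$---and that (i) is precisely square-freeness of $k_0$. The implication Part~1~$\Rightarrow$~(i)+(ii) is then immediate from a content computation together with Gauss's lemma. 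For the reverse implication I would produce, from (i)+(ii), a monic $g\in\mathbb{Z}[x]$ with $g(0)=0$ and a square-free $k\mid k_0$ with $kg\in J$, using that $\tilde g$ is $\mathbb{Z}$-torsion in the Noetherian ring $\mathbb{Z}[x]/J$ (since it vanishes after tensoring with $\mathbb{Q}$), so that an appropriate multiple or power of $\tilde g$ can be pushed into $J$.

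For the sufficiency in Part~1, assume $k(a^n+k_1a^{n-1}+\dots+k_{n-1}a)=0$ with $k$ positive square-free. The relation gives $a^n\equiv -(k_1a^{n-1}+\dots+k_{n-1}a)$ modulo the $k$-torsion subgroup $T$ of $R=\mathbb{Z}\langle a\rangle$; therefore $R/T$ is a finitely generated abelian group spanned by $a,\dots,a^{n-1}$, while $T$ is a $\mathbb{Z}/k$-module that splits cleanly as $\bigoplus_{p\mid k}T_p$ by square-freeness of $k$. For any $b\in R$ and subring $A'$ with $b\notin A'$, I would combine a truncation of $R$ modulo a finite-index ideal of the form $(a^N)+T'$ with separate reductions modulo each prime $p\mid k$; the argument runs in parallel on $R/T$ (where finite-rank abelian group theory applies) and on each $T_p$ (where the theory of monogenic $\mathbb{F}_p$-algebras takes over).

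For the necessity in Part~1, I would argue by contradiction in two sub-cases. (a)~No relation $kp(a)=0$ holds at all, so $R$ is torsion-free and the powers of $a$ are $\mathbb{Z}$-independent; here I would construct a proper subring $A'\subsetneq R$ whose image in every finite monogenic quotient absorbs the image of $a$, assembled from the universal Fermat-style identities satisfied by any finite monogenic ring in its generator, with $a\notin A'$ in $R$ itself thanks to $\mathbb{Z}$-independence of the powers of $a$. (b)~Relations $kp(a)=0$ exist but every achievable $k$ has a repeated prime factor, say $p^2\mid k$; here a $p$-adic obstruction lets me take $A'$ to be generated by $pa$ together with the higher-degree terms forced by multiplicative closure, so that the image of $a$ falls inside the image of $A'$ in every finite quotient while $a$ itself is missed in $R$. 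The hardest step, and where I expect the main technical work of the paper to live, is case~(b): translating the arithmetic obstruction ``no square-free $k$ works'' into a concrete failing pair $(a,A')$ whose obstruction to separation persists uniformly across every finite monogenic quotient is exactly where the square-freeness condition enters in its sharpest form.
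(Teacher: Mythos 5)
Your overall architecture (prove Part 1, deduce Part 2 by passing to the ideal $J\subset\mathbb{Z}[x]$ and Gauss's lemma) matches the paper's, and your necessity case (b) corresponds to the paper's Lemma 4, which is in fact disposed of there by a fairly short idempotent trick: with $p^{2}y=0$ the additive group generated by the elements $py+p(y^{2n}-y^{n})$ is a subring with zero multiplication, and the ``some power is idempotent'' identity in finite rings forces $py$ into it. The genuine gap is in your case (a). From ``no relation $k\cdot f(a)=0$ with $f$ monic holds'' you infer that the powers of $a$ are $\mathbb{Z}$-independent, and your separating subring is built on that independence. The inference is false: in $x\mathbb{Z}[x]/(4x^{2}+2x)$ one has $4a^{2}+2a=0$, so the powers of $a$ are $\mathbb{Z}$-dependent, yet a content computation shows no element of the ideal $(4x^{2}+2x)$ has the form $k\cdot(\text{monic without constant term})$, so no integral-torsion relation exists and this ring lands in your case (a). It satisfies your condition (i) (the content $2$ is square-free) but violates (ii) (the monic gcd over $\mathbb{Q}$ is $x^{2}+\tfrac{1}{2}x$), so the theorem must declare it non-separable, and your $\mathbb{Z}$-independence argument says nothing about it. The paper closes exactly this gap with Lemma 3 (a finitely separable ring is a ring of integral torsion, with torsion exponent equal to algebraic degree), whose proof --- the subring $A=\sum Z\,k_{0}^{i}\overline{a}^{i}$, the finite set $M$ with $M^{*}\cap A=\{0\}$, and the induction over the ideals $Z(L^{q})$ --- is by far the most technical part of the paper. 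You have located the main difficulty in case (b); it actually lives in the portion of case (a) that your independence assumption silently discards.

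On the sufficiency side your sketch is in the right spirit but underspecified at the same pressure point. Splitting the $k$-torsion ideal as $\bigoplus_{p\mid k}T_{p}$ parallels the paper's reduction (via the ideals $I_{k_i}$ and a subdirect embedding) to the case of prime torsion $\tau_{a}=p$; but there the paper does not argue ``on $R/T$ and on each $T_{p}$ in parallel'' --- it embeds the universal ring $Z^{*}[x]/(pf(x))$ into the product of a quotient with finitely generated additive group and a quotient of characteristic $p$, and the latter requires the standalone, nontrivial Proposition 6 that $Z_{p}[x]$ itself is finitely separable (proved by exhibiting $Z_{p}[x]$ as a finitely generated module over a monogenic subring). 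Your phrase ``the theory of monogenic $\mathbb{F}_{p}$-algebras takes over'' presupposes precisely this result, which must be proved, since the free monogenic ring of characteristic $p$ is itself one of the rings your Part 1 condition (with $n=1$, $k=p$) declares separable.
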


\begin{proclaim}{Замечание 1}
 Из теоремы Гильберта о базисе следует, что каждое конечно порожденное коммутативное кольцо является конечно определенным, то есть может быть задано конечным набором определяющих соотношений. Поэтому настоящая теорема дает полное, алгоритмически проверяемое, описание моногенных колец со свойством финитной отделимости.
\end{proclaim}

\begin{corollary}

Конечно порожденное PI-кольцо без кручения является финитно отделимым в том и только в том случае, когда его аддитивная группа конечно порождена.
\end{corollary}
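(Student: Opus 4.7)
Достаточность стандартна: если $R$ без кручения и конечно порождено как аддитивная группа, то $(R,+)\cong\mathbb{Z}^n$. Всякая подгруппа в $\mathbb{Z}^n$ замкнута в проконечной топологии, т.е.\ является пересечением подгрупп конечного индекса. Поэтому для любого подкольца $S\subset R$ и элемента $a\notin S$ найдётся натуральное $N$ с $a\notin S+NR$; факторкольцо $R/NR$ конечно и разделяет $a$ и $S$.

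Для необходимости прежде всего замечу, что любое подкольцо финитно отделимого кольца само финитно отделимо: если $a\notin S'\subseteq S\subseteq R$, то $S'$ является подкольцом и в $R$, и ограничение отделяющего гомоморфизма $\varphi\colon R\to F$ на $S$ даёт искомый конечный образ. Применяя это наблюдение к моногенному подкольцу $\mathbb{Z}\langle a\rangle$ для произвольного $a\in R$ и ссылаясь на пункт~1 теоремы, я получу существование положительного свободного от квадратов $k$ и целых $k_1,\dots,k_{n-1}$, при которых
\begin{equation*}
k(a^n+k_1a^{n-1}+\dots+k_{n-1}a)=0.
\end{equation*}
Отсутствие кручения в $R$ позволяет сократить на $k$, откуда $a^n+k_1a^{n-1}+\dots+k_{n-1}a=0$: каждый элемент $R$ оказывается корнем унитарного многочлена над $\mathbb{Z}$, т.е.\ целым над $\mathbb{Z}$.

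Заключительный шаг --- применение теоремы А.\,И.\,Ширшова о высоте к конечно порождённой PI-алгебре $R$ над нётеровым кольцом $\mathbb{Z}$: существуют ширшовские слова $u_1,\dots,u_s$ от образующих и высота $h$ такие, что $R$ порождается как $\mathbb{Z}$-модуль произведениями $u_{i_1}^{m_1}\cdots u_{i_l}^{m_l}$ длины $l\le h$. Поскольку каждое $u_j$ цело над $\mathbb{Z}$ (в силу предыдущего шага, применённого к самому элементу $u_j$), показатели $m_j$ сводятся к ограниченному набору, и таких произведений конечное число; значит, $R$ --- конечно порождённый $\mathbb{Z}$-модуль, а его аддитивная группа конечно порождена. Основной трудностью я считаю именно этот завершающий шаг: переход от поэлементной целости к конечной порождённости аддитивной группы существенно опирается и на PI-условие (через теорему Ширшова), и на отсутствие кручения, использованное при сокращении на $k$; остальные рассуждения --- либо прямое применение основной теоремы к моногенным подкольцам, либо стандартные факты о проконечной топологии в $\mathbb{Z}^n$.
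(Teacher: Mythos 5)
Ваше доказательство верно и по существу совпадает с доказательством в статье: необходимость выводится из целости каждого элемента (у автора --- через $d_j f_j(a_j)=0$ и сокращение на $d_j$ за счёт отсутствия кручения, у вас --- через пункт~1 теоремы для моногенного подкольца, что даёт то же самое) с последующим применением теоремы Ширшова о высоте для ограничения показателей, а достаточность --- это лемма~5 статьи, где ваш аргумент с проконечной топологией на $\mathbb{Z}^n$ лишь иначе оформляет тот же переход к конечному фактору $R/NR$.
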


\section{Определения и обозначения}

Определение группы, абелевой группы, кольца и поля, идеала, подкольца, фактор-кольца и канонического гомоморфизма из кольца в фактор-кольцо предполагаются известными. Рассматриваются кольца без требования существования 1, если не оговорено противное. Через $Z$ обозначается кольцо целых чисел.

{\bf 1.}
Элемент $a$ кольца $K$ называется алгебраическим, если он в этом кольце является корнем некоторого ненулевого многочлена с целыми коэффициентами (без свободного члена). Элемент, не являющийся алгебраическим, называется трансцендентным. Кольцо называется алгебраическим, если каждый его элемент алгебраический.

{\bf 2.}
Элемент $a$ кольца $K$ называется целым алгебраическим, если он в этом кольце является корнем некоторого ненулевого многочлена с целыми коэффициентами (без свободного члена) со старшим коэффициентом равным единице (такие многочлены называют унитарными).

{\bf 3.}
Элемент $a$ кольца $K$ назовем целокрутящимся, если он в этом кольце является корнем многочлена, являющегося произведением положительного целого числа на унитарный многочлен. Кольцо называется кольцом целого кручения, если каждый его элемент целокрутящийся.

{\bf 4.}
Через $D\left|{a}\right|$ обозначим наименьшую из возможных степеней ненулевых многочленов $f\left({x}\right)$ с целыми коэффициентами, для которых $f\left({a}\right)=0$ в кольце $K$. Такое число называют алгебраической степенью элемента $a$ в кольце $K$ (или просто --- степенью, когда это не вызывает конфликта).\\
Заметим, если $a$ --- алгебраический элемент, то его алгебраическая степень конечна. Алгебраическая степень трансцендентного элемента $a$ считается бесконечной.

{\bf 5.}
 Многочлены $f\left({x}\right)$ наименьшей степени с целыми коэффициентами (без свободного члена) и наименьшим положительным старшим членом, для которых $f\left({a}\right)=0$, называют минимальными многочленами алгебраического элемента $a$.

{\bf 6.}
 Целым кручением элемента $a$ назовем наименьшее из возможных натуральных чисел $k$ для которых выполнено равенство $k\cdot f\left({a}\right)=0$, где $f\left({x}\right)$ --- какой-либо унитарный многочлен с целыми коэффициентами (без свободного члена). Целое кручение элемента $a$ обозначим через $\tau _{a}$

{\bf 7.}
Наименьшую из возможных степеней унитарных многочленов $f\left({x}\right)$ без свободного члена), для которых выполняется равенство $k\cdot f\left({a}\right)=0$ (где $k$ --- какое-либо натуральное число) назовем показателем кручения элемента $a$ в кольце $K$ и обозначим $E\left|{a}\right|$.

\begin{proclaim}{Замечание 2}
 Если элемент $a$ не является целокрутящимся, то считаем $E\left|{a}\right|=\infty $ и $\tau _{a}=\infty $.
\end{proclaim}

{\bf 8.}
Если $a$ элемент кольца $K$, то через $I\left({a}\right)$ обозначают идеал кольца $K$, порожденный $a$.

{\bf 9.}
 Если $A$ и $B$ --- подмножества кольца $K$, то обозначают подмножества $A+B=\left\{{x+y\mid x\in A;y\in B}\right\};$ $A\cdot B=\left\{{xy \mid x\in A;y\in B}\right\}$. Если $L$ --- некоторое подмножество кольца целых чисел $Z$, $A$ --- подмножество кольца $K$, то обозначают подмножество кольца $K$: $L\cdot A=\left\{{xy \mid x\in L;y\in A}\right\}$.

\begin{flushleft}
Обозначения сохраняются в силе, если какое-либо из перечисленных выше подмножеств одноэлементно.
\end{flushleft}

{\bf 10.}
 Через $Z\left<{a_{1}, a_{1}, \dots a_{n}}\right>$ и $I\left({a_{1}, a_{1}, \dots a_{n}}\right)$ обозначается соответственно подкольцо и идеал кольца $K$, порожденные множеством элементов $a_{1}, a_{1}, \dots a_{n}\in K$.

{\bf 11.}
 Идеал, порожденный одним элементом, называется главным идеалом. Кольцо, порожденное одним элементом, называется моногенным.

{\bf 12.}
 Натуральное число $k$ называют свободным от квадратов, если оно не делится на квадрат простого числа. Легко видеть, что число свободное от квадратов это в точности произведение различных простых чисел, либо 1.

{\bf 13.}
 Пусть $n$ --- некоторое натуральное число. Через $I_{n}$ обозначаем идеал в кольце $K$, определяемый равенством $I_{n}=\{u\in K \mid nu=0\}$. Такие идеалы называют идеалами кручения.

{\bf 14.}
 Кольцо $R$ называется нетеровым, если выполнено одно из свойств:

1) каждый его идеал порождается (как идеал) конечным числом элементов;

2) любая возрастающая цепочка его идеалов $T_{1}\subset T_{2}. . $ стабилизируется, то есть найдется такое $n$, что $T_{n}=T_{n+1}=T_{n+2}=\dots $ ;

3) В каждом совокупности идеалов кольца есть максимальный элемент.\\
Легко доказывается, что указанные свойства эквивалентны (упражнение). В литературе в определении нетерового кольца иногда требуют наличие в кольце единицы. Мы здесь это не требуем.
\vspace{2mm}

{\bf Примеры:} кольцо целых чисел, кольцо четных чисел, любое поле. \\
По теореме Гильберта кольцо многочленов над кольцом целых чисел от конечного числа переменных -нетерово.

{\bf 15.}
 Кольцо $K$ называется кольцом главных идеалов, если каждый его идеал главный, то есть порождается (как идеал) одним элементом. Кольцо главных идеалов может не иметь единицы. Например, подкольцо кольца многочленов $Z_{p}[x]$ над простым полем $Z_{p}$ (поле остатков по простому модулю $p$), порожденное переменной $x$. Это все многочлены без свободного члена. Такое кольцо обозначим $Z^{* }_{p}[x]$.
{\bf 16.}
Легко видеть, что в этом кольце возможно деление с остатком. Отсюда вытекает, что $Z^{* }_{p}[x]$ --- кольцо главных идеалов. 

\begin{flushleft}
{\bf Примеры:} кольцо целых чисел, любое поле.
\end{flushleft}

\begin{proclaim}{Замечание 3}
 Кольцо многочленов от одной переменной с целыми коэффициентами не является кольцом главных идеалов.
 \end{proclaim}
 
\begin{flushleft}
{\bf Свойства (упражнение):}
\end{flushleft}

1) Класс колец главных идеалов замкнут относительно взятия гомоморфных образов и не замкнут относительно взятия подколец.

2) Кольцо многочленов от одной переменной с коэффициентами из кольца главных идеалов может не быть кольцом главных идеалов. (Например, кольцо многочленов от одной переменной с целыми коэффициентами не является кольцом главных идеалов).

3) Каждое кольцо главных идеалов нетерово.

{\bf 17.}
 Пусть $R$ —-- кольцо (как правило, считающееся коммутативным и не обязательно с единичным элементом). $ R$-модулем называется абелева группа $M$ с операцией умножения на элементы кольца $ R$ : $R\times M\rightarrow M. \left({r, m}\right)\rightarrow rm, $ которая удовлетворяет следующим четырем условиям:\\
1) $\forall r_{1}, r_{2}\in R$ $\forall m\in M$ $\left({r_{1}r_{2}}\right)m=r_{1}\cdot \left({r_{2}m}\right)$, \\
2) $\forall m\in M$ $1\cdot m=m$, если в кольце $R$, есть $1$, \\
3) $\forall r\in R$, $\forall m_{1}, m_{2}\in M$ $r\left({m_{1}+m_{2}}\right)=rm_{1}+rm_{2}$, \\
4) $\forall r_{1}, r_{2}\in R$, $m\in M$ $\left({r_{1}+r_{2}}\right)m=r_{1}m+r_{2}m$.

\begin{proclaim}{Примечание 1}
 В случае некоммутативного кольца $R$ такие модули часто называются левыми. Правыми модулями называют в этом случае такие объекты, у которых умножение элементов абелевой группы $M$ на элементы кольца $R$ рассматривается с правой стороны.
\end{proclaim}

{\bf 18.} Если в кольце $R$ есть единица $1$ и выполнено условие:\\
5) $\forall m\in M$ $1\cdot m=m$, то такой $R$-модуль $M$ называют унитарным. \\
В дальнейшем, если не оговорено противное, мы будем здесь рассматривать унитарные модули.

{\bf 19.}
Если в модуле $M$ определено умножение так, что вместе со сложением $M$ является кольцом, и если выполнены условия:\\
6) $\forall r_{1}, r_{2}\in R, m\in M$ $\left({r_{1}r_{2}}\right)\cdot m=r_{1}\left({r_{2}m}\right)$ \\
7) $\forall r\in R$, $\forall m_{1}, m_{2}\in M$ $r\left({m_{1}\cdot m_{2}}\right)=\left({rm_{1}}\right)\cdot m_{2}$, \\
то модуль $M$ называется алгеброй над кольцом $R$.

{\bf 20.}
Пусть $M$ --- некоторый унитарный $R$-модуль. Для любого подмножества $S\subset M$ множество линейных комбинаций
\begin{center}

 $a_{1}u_{i_{1}}+\dots+a_{m}u_{i_{m}}, $ где $a_{1}, \dots,, a_{m}\in R$ и $u_{i_{1}}, \dots, u_{i_{m}}\in S$.
\end{center}
является наименьшим подмодулем в $M$, содержащим подмножество $S. $ Он называется подмодулем, порожденным множеством $S$, и обозначается через $[S]$. Если $[S]=M$, то говорят, что модуль $M$ порождается множеством $S$. Если множество $S$ можно выбрать конечным, то говорят, что $M$ конечно порожден.

{\bf 21.}
Модуль, порожденный одним элементом, называется циклическим.

{\bf 22.}
 Система $\left\{{u_{1}, \dots, u_{n}}\right\}$ элементов модуля $M$ называется линейно независимой, если $r_{1}u_{1}+ \dots +r_{n}u_{n}=0$ ($r_{i}\in R$) только при $r_{i}=0 $ ($i=1, \dots, n$)\\
Линейно независимая система, порождающая модуль $M$, называется базисом. Если равенство $r_{1}u_{1}+\cdot \cdot \cdot +r_{n}u_{n}=0$ возможно не только при нулевых коэффициентах $r_{i}\in R$, то система $\left\{{u_{1}, \dots, u_{n}}\right\}$ элементов модуля $M$ называется линейно зависимой.

{\bf 23.}
 Мы будем использовать теорему о линейной зависимости линейных комбинаций. Пусть $\left\{{u_{1}, \dots, u_{m}}\right\}$ и $\left\{{v_{1}, \dots, v_{k}}\right\}$ две совокупности элементов модуля $M$ над областью целостности и все элементы второй совокупности суть линейные комбинации элементов первой совокупности. Тогда, если $k>m$, то совокупность $\left\{{v_{1}, \dots, v_{k}}\right\}$ элементов модуля $M$ линейно зависима.

{\bf 24.}
 Конечнопорожденный модуль, обладающий базисом, называется свободным.

{\bf 25.}
 Количество элементов в базисе называется рангом свободного модуля.

\begin{proclaim}{Замечание 4}
 Из пп. 23 следует, что количество элементов в базисе свободного модуля над кольцом главных идеалов не зависит от выбора базиса, ранг свободного модуля определен однозначно.
\end{proclaim}

{\bf 26.}
 Сумма подмодулей $L_{1}+L_{2}+\dots L_{n}$ модуля $M$ называется прямой суммой, если каждый элемент этой суммы единственным образом представим как сумма
 \begin{center}
 $u_{1}+u_{2}+\dots, u_{n}$, где $u_{i}\in L_{i}$ ($i=1, \dots, n$).
 \end{center}
 Tо есть из равенства \begin{equation*}
 u_{1}+u_{2}+\dots+ u_{n}=u^{'}_{1}+u^{'}_{2}+\dots+ u^{'}_{n}
 \end{equation*} следуют равенства
\begin{equation*}
 u_{i}=u^{'}_{i} (u_{i}, u^{'}_{i}\in L_{i} (i=1, \dots, n).
\end{equation*}

{\bf 27.}
 Кольцо $R$ называется кольцом без делителей нуля, если произведение любых его ненулевых элементов является ненулевым элементом.

\begin{flushleft}
{\bf Примеры:} кольцо целых чисел, любое поле. \\
\end{flushleft}
{\bf Свойства (упражнение):}\\
- Класс колец без делителей нуля замкнут относительно взятия подколец и не замкнут относительно гомоморфизмов. \\
- Кольцо многочленов с коэффициентами из кольца без делителей нуля само является кольцом без делителей нуля. \\
- Если в $R$-модуле $M$ существуют линейно независимые системы элементов, то кольцо $R$ обязано быть без делителей нуля (верно и обратное).

{\bf 28.}
 Коммутативное кольцо $R$ с единицей без делителей нуля, являющееся кольцом главных идеалов, называют областью целостности.

{\bf 29.}
 Евклидово кольцо --- область целостности $R$, для которой определена евклидова функция $D$ (евклидова норма) из множества ненулевых элементов кольца в множество неотрицательных целых чисел, такая, что возможно деление с остатком по норме меньшим делителя, то есть для любых $ a, b\in R$, и $b\ne 0$ имеется представление $ a=bq+r$, для которых $q, r\in R$ $D\left({r}\right)<D\left({b}\right)\}$ или $r=0$.

\begin{flushleft}
{\bf Примеры:} кольцо целых чисел ($D\left({u}\right)=\left|{u}\right|$, любо поле ($D\left({u}\right)=1$), кольцо многочленов от одной переменной над полем ($D\left({u}\right)=deg\left({u}\right)$ --- степень многочлена $u$). \\
\end{flushleft}
{\bf Свойства:}\\
- каждое евклидово кольцо является кольцом главных идеалов;\\
- в каждом евклидовом кольце для любых двух элементов существует их Н.О.Д, который может быть найден с помощью последовательного деления с остатком (алгоритм Евклида);\\
- в каждом евклидовом кольце $R$ выполняется теорема Евклида о линейном представлении Н.О.Д: Для любых $a_{1}, \dots a_{n}\in R$ имеет место $x_{1}a_{1}+ \dots +x_{n}a_{n}=d$,
для некоторых $x_{1}, \dots x_{n}\in R$ и $d=$Н.О.Д $\left({a_{1}, \dots a_{n}}\right)$. \\
Мы будем использовать хорошо известные свойства модулей над областями целостности.

{\bf 30.}
 1) (Теорема о строении подмодулей свободного модуля) Всякий подмодуль $N$ свободного унитарного $R$-модуля $M$ ранга $n$ над областью целостности является свободным $R$--модулем ранга $m\le n$, причем существует такой базис $\left\{{e_{1}, e_{2}, \dots e_{n}}\right\}$ модуля $M$ и такие (ненулевые) элементы $r_{i}\in R$, ($i=1, 2\dots n$), что $\{r_{1}e_{1}, r_{2}e_{2}, \dots, r_{m}e_{m}\}$ и $r_{i}$ делит $r_{i+1}$ ($i=1, 2\dots, m-1$) --- базис подмодуля $N$. \\
2) Всякий унитарный конечно порожденный модуль над областью целостности является прямой суммой циклических подмодулей. \\
3) Всякий унитарный конечно порожденный модуль без кручения над областью целостности является свободным модулем конечного ранга (см. \cite{byrb}, Алгебра том 2, стр. 54, следствие 2 )

{\bf 31.}
 Алгебра А над коммутативным кольцом $R$ с единицей называется PI-алгеброй, если в ней выполнено нетривиальное тождество
\begin{equation*}
 \sum\limits_{k=1}^{n}{\lambda _{k}u_{k}}=0,
\end{equation*}
 где $u_{k}$ --- семейство попарно различных слов от некоммутирующих переменных, а $\lambda _{k}\in R$ и идеал, порожденный всеми коэффициентами $\lambda _{k}$ в кольце $R$, совпадает со всем кольцом $R$. Каждое кольцо можно рассматривать как алгебру над кольцом целых чисел. Поэтому понятие PI-кольца есть частный случай понятия PI-алгебры.

\begin{proclaim}{Замечание 5}
 Мы будем использовать следующее известное свойство PI-колец и PI-алгебр. Теорема А. И. Ширшова о высоте \cite{shir}. Для любой конечно порожденной ассоциативной PI-алгебры $A$ над коммутативным кольцом найдутся натуральное число $h$ и элементы $a_{1}, a_{2}, \dots, a_{n}\in A$ такие, что любой элемент алгебры $A$ представляется в виде линейной комбинации элементов вида $w=a^{\alpha _{1}}_{i_{1}}a^{\alpha _{2}}_{i_{2}}\dots, a^{\alpha _{k}}_{i_{k}}$, для некоторого натурального числа $k<h$ (число $k$ называют высотой слова $w$ над $a_{1}, a_{2}, \dots, a_{n}$.
\end{proclaim}
\begin{proclaim}{Замечание 6}
 Мы будем использовать следующую теорему А. И. Мальцева, доказанную им в работе \cite{mal}: прямое произведение конечного набора финитно отделимых колец является финитно отделимым кольцом.
\end{proclaim}

\section{Вспомогательные утверждения}

Среди доказанных в этом параграфе утверждений часть представляет известные результаты. Но для удобства чтения и независимости от ссылок автор приводит их доказательства в учебных целях.
\begin{lemma}
1. Класс финитно отделимых колец замкнут относительно подколец и гомоморфных образов. \\
2. Любое конечное подмножество элементов финитно отделимого кольца можно отделить от непересекающегося с ним подкольца некоторым гомоморфизмом исходного кольца в конечное кольцо.
\end{lemma}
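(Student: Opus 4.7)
План таков. Утверждение~1 распадается на два случая (подкольца и гомоморфные образы), а утверждение~2 получается диагональным приёмом.

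Для подколец: пусть $K$ финитно отделимо, $K'\subset K$ --- подкольцо, $a\in K'$ и $A'\subset K'$ --- подкольцо с $a\not\in A'$. Так как $A'$ автоматически является подкольцом $K$, применяю финитную отделимость $K$ к паре $(a,A')$ и получаю гомоморфизм $\varphi:K\to F$ в конечное кольцо с $\varphi(a)\not\in\varphi(A')$; его ограничение $\varphi|_{K'}:K'\to F$ даёт искомое отделение в $K'$.

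Для гомоморфных образов план менее очевиден. Пусть $\pi:K\to L$ --- сюръективный гомоморфизм с ядром $I$, даны $\bar a\in L$ и подкольцо $\bar A\subset L$ с $\bar a\not\in\bar A$. Подниму $\bar a$ до прообраза $a\in K$ и положу $A=\pi^{-1}(\bar A)$; тогда $A$ --- подкольцо $K$, содержащее $I$, причём $a\not\in A$ (иначе $\bar a=\pi(a)\in\bar A$). По финитной отделимости $K$ найду гомоморфизм $\varphi:K\to F$ в конечное кольцо с $\varphi(a)\not\in\varphi(A)$; заменой $F$ на $\varphi(K)$ считаю $\varphi$ сюръективным. Тогда $\varphi(I)$ --- идеал $F$, и композиция $K\to F\to F/\varphi(I)$ убивает $I$, а потому пропускается через $L$ до гомоморфизма $\tilde\varphi:L\to F/\varphi(I)$ в конечное кольцо. Ключевой шаг --- проверить, что $\tilde\varphi(\bar a)\not\in\tilde\varphi(\bar A)$: если бы $\tilde\varphi(\bar a)=\tilde\varphi(\bar x)$ для некоторого $\bar x\in\bar A$ с прообразом $x\in A$, то $\varphi(a)-\varphi(x)\in\varphi(I)$, откуда, в силу $I\subset A$, получалось бы $\varphi(a)\in\varphi(A)+\varphi(I)=\varphi(A)$ --- противоречие с выбором $\varphi$.

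Утверждение~2 вытекает из стандартного диагонального построения. Для каждого $i=1,\dots,n$ финитная отделимость $K$ даёт гомоморфизм $\varphi_i:K\to F_i$ в конечное кольцо с $\varphi_i(a_i)\not\in\varphi_i(A')$. Диагональный гомоморфизм $\varphi=(\varphi_1,\dots,\varphi_n):K\to F_1\times\dots\times F_n$ отделяет все $a_i$ от $A'$ сразу: если бы $\varphi(a_i)=\varphi(x)$ для некоторого $x\in A'$, то проекция на $i$-ю координату дала бы $\varphi_i(a_i)=\varphi_i(x)\in\varphi_i(A')$, что невозможно. Основная техническая тонкость всего рассуждения сосредоточена в случае гомоморфных образов: именно включение $I\subset A$, гарантированное выбором $A$ как полного прообраза $\bar A$, обеспечивает сохранение отделённости при переходе к факторкольцу $F/\varphi(I)$.
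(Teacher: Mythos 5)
Ваше рассуждение верно и по существу совпадает с доказательством в статье: для гомоморфных образов вы берёте полный прообраз $\pi^{-1}(\bar A)=A+I$, отделяете в $K$ и спускаетесь в общий фактор $F/\varphi(I)\cong K/(I+I')$ --- это тот же приём, что и в тексте. Статья доказывает подробно только случай гомоморфных образов, а случай подколец и утверждение~2 оставляет читателю; ваши ограничение гомоморфизма и диагональное вложение в конечное произведение корректно закрывают эти пропуски.
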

\begin{proof}
1. Пусть $K$ --- финитно отделимое кольцо и $K/I$ --- гомоморфный образ $K$ по некоторому идеалу $I$. Пусть $a\in K$, $A$ --- подкольцо кольца $K$. Через $\overline{a}$ и $\overline{A}$ обозначим образ элемента $a$ и образ подкольца $A$ соответственно при каноническом гомоморфизме $K\rightarrow K/I$. Каждый элемент и каждое подкольцо кольца $K/I$ есть образ некоторого элемента $a$ и подкольца $A$ кольца $K$. Пусть $\overline{a}\not\in \overline{A}$ в кольце $K/I$. Это означает, что $a\not\in A+I$ в кольце $K$. Заметим, что $A+I$ --- это подкольцо в кольце $K$. В силу финитной отделимости $K$ существует конечное кольцо $F$ и гомоморфизм $\varphi:K\rightarrow F$ такой, что $\varphi \left({a}\right)\not\in \varphi \left({A+I}\right)$. Пусть $I'- $ ядро этого гомоморфизма $\varphi $. Без ограничения общности (по теореме о гомоморфизмах колец) будем считать, что $F=K/I'$ и $\varphi $ --- канонический гомоморфизм $K\rightarrow K/I'$. Ясно, что $K/\left({I+I'}\right)$ --- конечное кольцо (как гомоморфный образ конечного кольца $F=K/I'$). Имеем $a\not\in A+I+I'$ в кольце $K$. Это означает, что при каноническом гомоморфизме $K/I\rightarrow K/\left({I+I'}\right)$ элемент $\overline{a}$ отделяется от подкольца $\overline{A}$. Вывод: $K/I$ --- финитно отделимое кольцо.

Остальные утверждения {\bf леммы 1} предоставим доказать читателю как упражнения.
\end{proof}
\begin{lemma}
 1. Финитно отделимое кольцо является алгебраическим.\\
2. В финитно отделимом кольце для любых двух коммутирующих элементов $a, b$ и для любого простого числа $p$ имеет место: $f\left({a, b}\right)=0$ для некоторого многочлена $f\left({x, y}\right)$ с целыми коэффициентами (без свободного члена), среди которых есть не делящиеся на $p$.
\end{lemma}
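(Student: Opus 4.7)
The plan is to prove Part~2 by contradiction; Part~1 will follow from the same construction applied inside $Z\langle a\rangle$ with one generator.

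Assume $a,b\in K$ commute, $p$ is a prime, and every polynomial $f(x,y)$ with integer coefficients, no constant term, and $f(a,b)=0$ in $K$ has all coefficients divisible by $p$. Set $R=Z\langle a,b\rangle\subseteq K$. Since $a,b$ commute, $R$ is commutative, so $pR=\{pr:r\in R\}$ is an ideal of $R$, and in particular a subring of $K$. The first key observation is that, under the contradictory hypothesis, $f(a,b)\in pR$ if and only if every coefficient of $f$ is divisible by $p$: the converse direction follows because $f(a,b)=pg(a,b)$ gives $(f-pg)(a,b)=0$, so by hypothesis every coefficient of $f-pg$, hence of $f$, is divisible by $p$. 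In particular, no nonzero monomial $a^ib^j$ ($i+j\ge 1$) lies in $pR$.

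Next, I would invoke Lemma~1.2 to choose, for any $N$, a homomorphism $\varphi:K\to F$ to a finite ring simultaneously separating every monomial $a^ib^j$ with $1\le i+j\le N$ from $pR$. Write $S=\varphi(R)$, $u=\varphi(a)$, $v=\varphi(b)$. Then each $u^iv^j$ is nonzero in the finite $\mathbb{F}_p$-vector space $S/pS$, whose dimension $d$ does not depend on $N$. Choosing $N$ so that $\binom{N+2}{2}-1>d$, pigeonhole in $S/pS$ yields integers $c_{ij}$, not all divisible by $p$, with $\sum c_{ij}u^iv^j\in pS$. Translating back, set $f(x,y)=\sum c_{ij}x^iy^j$ and subtract a suitable $pg$ to obtain a polynomial $h=f-pg$ with integer coefficients, no constant term, some coefficient not divisible by $p$, and $h(a,b)\in\ker\varphi$.

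By the contradictory hypothesis, $h(a,b)\ne 0$ in $K$: we have produced a nonzero element of $\ker\varphi$ realized as $h(a,b)$ for an $h$ that does not reduce to $0$ modulo $p$. This is the crux of the argument and, I expect, the hardest step: it must be upgraded to a genuine relation $h(a,b)=0$. The plan is to iterate. Finite separability (via the residual finiteness it implies) gives $\varphi':K\to F'$ with $\varphi'(h(a,b))\ne0$. Composing $\varphi$ with $\varphi'$ into a single homomorphism $K\to F\times F'$ and rerunning the previous construction produces a new witness $h'$ with $h'(a,b)$ in the strictly smaller intersection $\ker\varphi\cap\ker\varphi'$, still with a coefficient not divisible by $p$ and still nonzero in $K$. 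Because $R$ is a finitely generated commutative ring, hence a quotient of the Noetherian ring $Z[x,y]$, the ascending chain condition on the ideal of relations forces this descent to stabilise after finitely many steps, at which point the witness must vanish, yielding the sought relation with some coefficient not divisible by $p$ and contradicting the hypothesis.

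Part~1 is then the same argument with one generator: applied to the subring $Z\langle a\rangle\subseteq K$ (which is finitely separable by Lemma~1.1), the construction — now specialised to polynomials in a single variable — produces a nonzero $h(x)\in Z[x]$ without constant term such that $h(a)=0$, so $a$ is algebraic.
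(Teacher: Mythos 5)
Your approach is genuinely different from the paper's, but it contains two gaps that I do not see how to close. First, the pigeonhole step has its quantifiers in the wrong order: you fix $N$, then invoke Lemma 1(2) to obtain a homomorphism $\varphi$ onto a finite ring $F$ separating all monomials $a^ib^j$ with $1\le i+j\le N$ from $pR$, and then assert that the dimension $d$ of $S/pS$ ``does not depend on $N$''. But $F$ (hence $S=\varphi(R)$ and $d$) is produced by the separability hypothesis \emph{after} $N$ is chosen, and nothing bounds $d$ uniformly: separating a larger finite set may require a larger finite quotient. So you cannot ``choose $N$ so that $\binom{N+2}{2}-1>d$''. Second, the termination of your iteration appeals to the ascending chain condition in the Noetherian ring $Z[x,y]$, but the chain you actually build, $\ker\varphi\supsetneq\ker\varphi\cap\ker\varphi'\supsetneq\cdots$, is \emph{descending}, and Noetherian rings do not satisfy the descending chain condition (already in $Z$ one has $(2)\supset(4)\supset(8)\supset\cdots$). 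Even granting stabilisation, it is not explained why it would force the witness $h(a,b)$ to vanish rather than merely to lie in ever smaller kernels. You yourself flag the upgrade from $h(a,b)\in\ker\varphi$ to $h(a,b)=0$ as the crux, and that is precisely the step that is missing.

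The paper sidesteps both difficulties with a single separation. It takes $B=\{a^{2n}-a^{n}+a\mid n\ge 1\}$ and the subring $K'=b[B]+b^{2}K$ (for part 1, $K'=p[B]+p^{2}K$), which is closed under multiplication because all products fall into $b^{2}K$. If $ba\notin K'$, one separating homomorphism $\varphi$ into a finite ring already gives a contradiction: some power of $\varphi(a)$ is idempotent, so $\varphi(a)^{2n}=\varphi(a)^{n}$ for some $n$, whence $\varphi(ba)=\varphi(b)\varphi(a^{2n}-a^{n}+a)\in\varphi(K')$. Hence $ba\in K'$ holds exactly, in $K$ itself, and writing out what membership in $b[B]+b^{2}K$ means immediately yields a relation $f(a,b)=0$ in which the coefficient of $yx$ or of $yx^{2m}$ is not divisible by $p$. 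The lesson is to engineer a single subring whose non-membership by one element is refuted in \emph{every} finite quotient, rather than to accumulate approximate relations from a growing family of finite quotients and then try to pass to the limit.
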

\begin{proclaim}{Замечание 7}
Легко видеть, что в кольце многочленов $R[x, y]$ над любым полем $R$ равенство $f\left({a, b}\right)=0$ для переменных $x, y$ не выполняется и поэтому указанное кольцо не является финитно отделимым.
\end{proclaim}
\begin{proof}
1. Пусть $K$ --- финитно отделимое кольцо и $a\in K$. Покажем, что $a$ --- алгебраический элемент. Без ограничения общности, на основании {\bf леммы 1}, далее будем считать, что $K$ совпадает с моногенным кольцом $Z\left<{a}\right>$. Пусть $p$ --- произвольное простoе числo. Пусть $B=\{a^{2n}-a^{n}+a \mid n=1, 2, 3,\dots \}$, $[B]$ --- аддитивная подгруппа кольца $K$, порожденная множеством $B$. Рассмотрим подмножество $K'$ кольца $K$, определяемое равенством: $K'=p[B]+p^{2}K$. Покажем, что, что $K'$ --- подкольцо в кольце $K$. Замнутость $K'$ по сложению (вычитанию) следует из определений. Покажем замкнутость по умножению. Действительно, для любых элементов $u, v\in K'$ имеет место $u\cdot v\in p^{2}K\subset K'$. Если $pa\not\in K'$, то на основании финитной отделимости кольца $K$ существует конечное кольцо $F$ и гомоморфизм $\varphi:K\rightarrow F$ такой, что $\varphi \left({pa}\right)\not\in \varphi \left({K'}\right)$. Но в конечном кольце моногенная мультипликативная полугруппа, порожденная любым элементом --- конечна. В конечной полугруппе для любого элемента некоторая его степень является идемпотентом, то есть $\varphi \left({a}\right)^{2n}=\varphi \left({a}\right)^{n}$ для некоторого натурального $n$. Тогда\begin{equation*}
    \varphi \left({pa}\right)=p\varphi \left({a^{2n}-a^{n}+a}\right)\in \varphi \left({pB}\right)\subset \varphi \left({K'}\right).
\end{equation*} Получаем противоречие из предположения $pa\not\in K'$. Вывод: $pa\in K'$. Отсюда следует равенство\begin{equation*}
     pa=pz_{n}\left({a^{2n}-a^{n}+a}\right)+\dots+ pz_{1}\left({a^{2}-a+a}\right)+p^{2}g\left({a}\right)
\end{equation*} для некоторого многочлена $g\left({x}\right)$ с целыми коэффициентами (без свободного члена), некоторого натурального числа $n$ и некоторых целых чисел $z_{1}, \dotsб z_{n}$. Это означает, что $f\left({a}\right)=0$, где\begin{equation*}
    f\left({x}\right)=pz_{n}\left({x^{2n}-x^{n}+x}\right)+ \dots + pz_{1}\left({x^{2}-x+x}\right)+p^{2}g\left({x}\right)-px.
\end{equation*} Заметим, что не все коэффициенты многочлена $f\left({x}\right)$ делятся на $p^{2}$, то есть $f\left({x}\right)$ --- ненулевой многочлен (без свободного члена). Вывод: $a$ --- алгебраический элемент. Утверждение 1 леммы 2 доказано.\vspace{1mm}

2. Пусть $K$ --- финитно отделимое кольцо и $a, b\in K$. Далее, без ограничения общности, на основании леммы 1 считаем, что кольцо $K$ совпадает с подкольцом $Z\left<{a, b}\right>$. Пусть $p$ --- произвольное простoе числo. Пусть $B=\{a^{2n}-a^{n}+a\mid n=1, 2, 3\dots \}$, $[B]$ --- аддитивная подгруппа кольца $K$, порожденная множеством $B$. Рассмотрим подмножество $K'$ кольца $K$ определяемое равенством: $K'=b[B]+b^{2}K$. Покажем, что, что $K'$ --- подкольцо в кольце $K$. Замнутость $K'$ по сложению (вычитанию) следует из определений. Покажем замкнутость по умножению. Действительно, для любых элементов
$u, v\in K'$ имеет место $u\cdot v\in b^{2}K\subset K'$. Если $ba\not\in K'$, то на основании финитной отделимости кольца $K$ существует конечное кольцо $F$ и гомоморфизм $\varphi:K\rightarrow F$ такой, что $\varphi \left({ba}\right)\not\in \varphi \left({K'}\right)$.
Но в конечном кольце моногенная мультипликативная полугруппа, порожденная любым элементом --- конечна. В конечной полугруппе для любого элемента некоторая его степень является идемпотентом, то есть $\varphi \left({a}\right)^{2n}=\varphi \left({a}\right)^{n}$ для некоторого натурального $n$. Тогда\begin{equation*}
     \varphi \left({ba}\right)=\varphi \left({b}\right)\varphi \left({a^{2n}-a^{n}+a}\right)\in \varphi \left({pB}\right)\subset \varphi \left({K'}\right).
\end{equation*} Получаем противоречие из предположения $ba\not\in K'$. Вывод: $ba\in K'$. Отсюда следует равенство\begin{equation*}
    ba=bz_{n}\left({a^{2n}-a^{n}+a}\right)+\dots+bz_{1}\left({a^{2}-a+a}\right)+b^{2}g\left({a, b}\right)
\end{equation*} для некоторого многочлена $g\left({x, y}\right)$ с целыми коэффициентами (без свободного члена), некоторого натурального числа $n$ и некоторых целых чисел $z_{1}, \dots, z_{n}$. Это означает, что $f\left({a, b}\right)=0$, где \begin{equation*}
    f\left({x, y}\right)=yz_{n}\left({x^{2n}-x^{n}+x}\right)+\dots + yz_{1}\left({x^{2}-x+x}\right)+y^{2}g\left({x, y}\right)-yx.
\end{equation*}
Заметим, что не все коэффициенты многочлена $f\left({x, y}\right)$ делятся на $p$. Действительно, если все числа $z_{1}, \dots z_{n}$ делятся на $p$, то коэффициент у одночлена $yx$ в многочлене $f\left({x, y}\right)$ не делятся на $p$. Если же не все $z_{1}, \dots, z_{n}$ делятся на $p$, то среди этих чисел найдется $z_{m}$ с наибольшим номером $m$, которое не делится на $p$. Тогда коэффициент у одночлена $yx^{2m}$ в многочлене $f\left({x, y}\right)$ не делится на $p$. Вывод: свойство 2 выполнено. Лемма 2 доказана.
\end{proof}
\begin{proclaim}{Замечание 8}
Во второй части леммы 2 (свойство 2) фактически доказано, что для любого простого числа $p$ и для любых двух элементов $a, b$ финитно отделимого кольца имеет место $b\cdot f\left({a}\right)=b^{2}g\left({a, b}\right)$ для некоторых многочленов $f\left({x}\right)$ (без свободного члена) и $g\left({x, y}\right)$ с целыми коэффициентами и без свободного члена, причем не все коэффициенты многочлена $f\left({x}\right)$ делятся на $p$. Откуда следует \begin{equation*}
    f_{0}\left({b}\right)a^{n}+f_{1}\left({b}\right)a^{n-1}+\dots f_{n-1}\left({b}\right)a=0
\end{equation*} для некоторого натурального числа $n$ и некоторых многочленов \\ $f_{0}\left({x}\right), f_{1}\left({x}\right), +\dots f_{n-1}\left({x}\right)$ с целыми коэффициентами без свободных членов, причем не все коэффициенты многочлена $f_{0}\left({x}\right)$ делятся на $p$.
\end{proclaim}

\begin{proclaim}{Предложение 1}
1) Для каждого алгебраического элемента $a$ любого кольца минимальный многочлен $f\left({x}\right)$ -единственный. Каждый многочлен $g\left({x}\right)$ с целыми коэффициентами (и без свободного члена), для которого $g\left({a}\right)=0$, при умножении на некоторое число $k>0$ делится на минимальный многочлен $f\left({x}\right)$ элемента $a$ в кольце многочленов с целыми коэффициентами, причем все коэффициенты $f\left({x}\right)$ делятся на $k$. \\
2) Если $l\cdot \widetilde{f}\left({a}\right)=0$ для некоторого унитарного многочлена $\widetilde{f}\left({x}\right)$ с целыми коэффициентами без свободного члена и некоторого целого положительного числа $l$, то минимальный многочлен $f\left({x}\right)$ элемента $a$ имеет вид $f\left({x}\right)=d\cdot f^{* }\left({x}\right)$, где $f^{* }\left({x}\right)$ -некоторый унитарный многочлен с целыми коэффициентами и без свободного члена.
\end{proclaim}
\begin{proof}
1) Пусть $f\left({x}\right)$ --- минимальный многочлен элемента $a$ кольца $K$ и пусть $g\left({x}\right)$ --- многочлен с целыми коэффициентами, для которого $g\left({a}\right)=0$.
В кольце $Q[x]$ многочленов с рациональными коэффициентами можно разделить многочлен $g\left({x}\right)$ на $f\left({x}\right)$ с остатком. То есть представить $g\left({x}\right)$ в виде $g\left({x}\right)=f\left({x}\right)\alpha \left({x}\right)+\beta \left({x}\right)$, где $\alpha \left({x}\right), \beta \left({x}\right)$  --- некоторые многочлены с рациональными коэффициентами, причем степень многочлена $\beta \left({x}\right)$ меньше степени многочлена $f\left({x}\right)$. Существует такое натуральное число $k$, что $k\alpha \left({x}\right)$ и $k\beta \left({x}\right)$ - многочлены с целыми коэффициентами. (достаточно в качестве $k$ взять нок знаменателей коэффициентов многочленов $\alpha \left({x}\right), \beta \left({x}\right)$). Тогда имеет место
\begin{center}
     $kg\left({x}\right)=f\left({x}\right)\left({k\alpha \left({x}\right)}\right)+k\beta \left({x}\right)$,
\end{center} откуда
\begin{center}
    $k\beta \left({a}\right)=kg\left({a}\right)-f\left({a}\right)\left({k\alpha \left({a}\right)}\right)=0$.
\end{center} Многочлен $k\beta \left({x}\right)$ - с целыми коэффициентами и имеет степень такую же как и многочлен $\beta \left({x}\right)$, то есть меньшую степени минимального многочлена $f\left({x}\right)$. Если $k\beta \left({x}\right)$ ненулевой многочлен, то это противоречит определению минимального многочлена. Вывод: $k\beta \left({x}\right)=0$.\\
То есть $kg\left({x}\right)=f\left({x}\right)\left({k\alpha \left({x}\right)}\right)$. Из последнего равенства вытекает единственность минимального многочлена.

Действительно, если $g\left({x}\right)$ --- еще один минимальный многочлен, то, как показано выше, в кольце многочленов с рациональными коэффициентами $f\left({x}\right)$ и $g\left({x}\right)$ должны делиться друг на друга. Это возможно для многочленов одинаковой степени только при условии, что они отличаются на постоянный множитель. То есть $g\left({x}\right)=r\cdot f\left({x}\right)$ Но, поскольку старшие коэффициенты многочленов $f\left({x}\right)$ и $g\left({x}\right)$ равны, заключаем $r=1$, откуда $g\left({x}\right)=f\left({x}\right)$.

Теперь докажем вторую часть первого пункта предложения 1. Пусть теперь $g\left({x}\right)$  ---  многочлен с целыми коэффициентами (без свободного члена), для которого $g\left({a}\right)=0$ и не обязательно минимальный. Далее можно использовать следующие весьма очевидные свойства многочленов с целыми коэффициентами. Если Н.О.Д коэффициентов многочлена назвать его содержанием, то выполняются следующие свойства:\\
а) содержание произведения двух многочленов равно произведению их содержаний. \\
б) если один многочлен делится на другой, то содержание первого делится на содержание второго. \\
Выше было установлено, что в кольце многочленов с целыми коэффициентами многочлен $kg\left({x}\right)$ делится на многочлен $f\left({x}\right)$ для некоторого натурального числа $k$. Будем считать $k$ -наименьшим из возможных с этим свойством. Пусть $kg\left({x}\right)=f\left({x}\right)h\left({x}\right)$ для некоторого многочлена $h\left({x}\right)$ с целыми коэффициентами. Если содержание многочлена $h\left({x}\right)$ не взаимно просто с $k$, то в последнем равенстве можно было бы поделит на их общий делитель и получилось бы равенство $k'g\left({x}\right)=f\left({x}\right)h'\left({x}\right)$ многочленов с целыми коэффициентами, у которого $k'<k$. это противоречит выбору $k$. Полученное означает, что $d$  --- содержание $f\left({x}\right)$ должно делиться на $k$. Утверждение 1 настоящего предложения доказано.\\
2) Пусть $l\cdot \widetilde{f}\left({a}\right)=0$ для некоторого унитарного многочлена $\widetilde{f}\left({x}\right)$ с целыми коэффициентами без свободного члена и некоторого целого положительного числа $l$.

По первому пункту настоящего предложения $k\cdot l\cdot \widetilde{f}\left({x}\right)=f\left({x}\right)h\left({x}\right)$, где $k$ некоторое положительное целое число, $f\left({x}\right)$ -минимальный многочлен элемента $a$ в кольце $K$, а $h\left({x}\right)$  --- многочлен с целыми коэффциентами, причем содержание многочлена $h\left({x}\right)$ взаимно просто с числом $k$. и все коэффициенты многочлена $f\left({x}\right)$ делятся на $k$. Многочлен $f\left({x}\right)$ можно представить в виде $f\left({x}\right)=d\cdot f^{* }\left({x}\right)$, где $f^{* }\left({x}\right)$  ---  многочлен с целыми коэффициентами с единичным содержанием и без свободного члена, а $d$  --- содержание многочлена $f\left({x}\right)$. (говоря проще, из многочлена $f\left({x}\right)$ вынесем наибольший общий делитель его коэффициентов). Имеем $k\cdot l\cdot \widetilde{f}\left({x}\right)=df^{* }\left({x}\right)h\left({x}\right)$, откуда $l\cdot \widetilde{f}\left({x}\right)=d'f^{* }\left({x}\right)h\left({x}\right)$, где $d=kd'$. Пусть $h\left({x}\right)=d''h^{* }\left({x}\right)$, где $d''$  --- содержание многочлена $h\left({x}\right)$, а $h^{* }\left({x}\right)$  ---  многочлен с целыми коэффициентами с единичным содержанием и без свободного члена (говоря проще, из многочлена $h\left({x}\right)$ вынесем наибольший общий делитель его коэффициентов). Тогда $l\cdot \widetilde{f}\left({x}\right)=d'd''f^{* }\left({x}\right)h^{* }\left({x}\right)$, откуда $l=d'd''$ (мы использовали: содержание произведения двух многочленов равно произведению их содержаний), то есть $\widetilde{f}\left({x}\right)=f^{* }\left({x}\right)h^{* }\left({x}\right)$. Поскольку $\widetilde{f}\left({x}\right)$  --- унитарный многочлен, то из последнегоравенства вытекает, что $f^{* }\left({x}\right)$  --- унитарный многочлен (достаточно приравнять старшие коэффициенты многочленов $\widetilde{f}\left({x}\right)$ и $f^{* }\left({x}\right)h^{* }\left({x}\right)$). Утверждение 2 настоящего предложения доказано. Предложение 1 доказано полностью.
\end{proof}
\begin{lemma}
Финитно отделимое кольцо является кольцом целого кручения, причем для любого элемента показатель кручения равен его алгебраической степени.
\end{lemma}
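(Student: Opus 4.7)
The lemma has two parts: (i) every $a \in K$ is integer torsion, and (ii) $E|a| = D|a|$. I would first show that (ii) reduces quickly to (i). Granting (i), there exist $l>0$ and a monic $\tilde f\in\mathbb Z[x]$ without constant term with $l\tilde f(a)=0$; by Proposition 1(2), the minimal polynomial $f$ of $a$ has the form $f=d\cdot f^*$ with $f^*$ monic and $\deg f^*=\deg f=D|a|$, so $d\cdot f^*(a)=f(a)=0$ witnesses $E|a|\le\deg f^*=D|a|$. The reverse inequality $D|a|\le E|a|$ is immediate from the definitions, since any witness $k\tilde f$ for $E|a|$ is a nonzero polynomial of degree $E|a|$ vanishing at $a$. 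Hence $E|a|=D|a|$.

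For (i), by Lemma 1 we may pass to $K=Z\langle a\rangle$, and by Lemma 2(1), $a$ is algebraic. Let $f(x)=c_nx^n+\dots+c_1x$ be its minimal polynomial with $c_n>0$. By Proposition 1(2), integer torsion of $a$ is equivalent to $c_n\mid c_i$ for every $i$, i.e., $c_n=\mathrm{cont}(f)$. Since $\mathrm{cont}(f)\mid c_n$ is trivial, it suffices to prove, for every prime $p$, that $v_p(c_n)\le v_p(\mathrm{cont}(f))$.

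The plan is: for each prime $p$, produce a polynomial $\Phi_p\in\mathbb Z[x]$ (no constant term) with $\Phi_p(a)=0$ whose content \emph{and} whose leading coefficient are coprime to $p$. Remark 8, applied to the commuting pair $(a,b)=(a,a^N)$ for $N$ larger than the $x$-degree of the bivariate polynomial it yields, already gives such a polynomial whose content is coprime to $p$: substituting $y=x^N$ places each bivariate coefficient of the witness $f^{(p)}(x,y)$ at a distinct degree $i+Nj$, so the coprime-to-$p$ coefficient of $f^{(p)}$ becomes a coefficient of $\Phi_p$. Granting the matching leading-coefficient property, Proposition 1(1) produces $k_p\Phi_p=f\cdot h_p$ with $\gcd(k_p,\mathrm{cont}(h_p))=1$ and $k_p\mid c_i$ for every $i$. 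Gauss's lemma on contents, together with $v_p(\mathrm{cont}(\Phi_p))=0$ and the coprimality of $k_p$ and $\mathrm{cont}(h_p)$, yields $v_p(k_p)=v_p(\mathrm{cont}(f))$. Comparing leading coefficients, $k_p\cdot\mathrm{lead}(\Phi_p)=c_n\cdot\mathrm{lead}(h_p)$, and since $v_p(\mathrm{lead}(\Phi_p))=0$ we get $v_p(k_p)\ge v_p(c_n)$, i.e., $v_p(\mathrm{cont}(f))\ge v_p(c_n)$, as required.

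The hard part is to arrange that the leading coefficient of $\Phi_p$ is coprime to $p$. Remark 8 provides only that some coefficient of the bivariate witness $f^{(p)}(x,y)$ is coprime to $p$; this coefficient may not lie at the maximal monomial. I would address this by reworking the construction inside Lemma 2(2)'s proof with the specific goal of placing the $p$-coprime coefficient at the leading monomial — for instance by choosing the exponent $N$ in $b=a^N$ and adapting the polynomial $g(a,b)$ so that the monomial of highest total degree after the substitution $y=x^N$ is precisely the one carrying the coprime-to-$p$ coefficient, possibly by iterating Remark 8. This is the technical heart of the proof; everything else is routine manipulation of contents via Proposition 1 and Gauss's lemma. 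Once (i) is established, part (ii) follows from the reduction above.
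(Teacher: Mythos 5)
Your reduction of the second assertion ($E|a|=D|a|$) to the first via Proposition 1(2) is fine, and the Gauss's-lemma bookkeeping is correct as a conditional statement. But there is a genuine gap, and it is worse than the unfinished ``technical heart'' you flag: the object you propose to construct does not exist in general. Take $K=Z\left<{a}\right>$ presented by the single relation $pa^{2}=0$, i.e. $K=xZ[x]/px^{2}Z[x]$. This ring is finitely separable (Lemma 7: $a$ is integer-torsion with $\tau_{a}=p$ prime), its minimal polynomial is $px^{2}$, yet every polynomial without constant term vanishing at $a$ lies in $px^{2}Z[x]$ and hence has \emph{all} coefficients divisible by $p$. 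So no $\Phi_{p}$ with content, let alone leading coefficient, coprime to $p$ exists, and the implication ``finitely separable $\Rightarrow$ $\Phi_{p}$ exists for every prime $p$'' is false. Your derivation of $v_{p}(c_{n})\le v_{p}(\mathrm{cont}(f))$ therefore cannot be run for the primes dividing $\mathrm{cont}(f)$. The plan could only be salvaged by first dividing out the content (passing to $K/I_{d}$ with $d=\mathrm{cont}(f)$, which is what the paper does), but after that step the claim ``$\overline{a}$ admits an annihilator with leading coefficient coprime to $p$ for every $p$'' is literally equivalent to the assertion $k_{0}=1$ being proved, so nothing has been reduced.

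Even granting that repair, the sketched construction of $\Phi_{p}$ does not go through. In Remark 8 the identity is $b\cdot f(a)=b^{2}g(a,b)$, where only $f$ is controlled modulo $p$; the polynomial $g$ is an arbitrary by-product of the membership $ba\in K'$ and has unbounded, uncontrolled degree. After substituting $b=a^{N}$ the monomial of highest degree comes from $a^{2N}g\left({a,a^{N}}\right)$, about whose coefficients modulo $p$ nothing is known, so the leading coefficient of $\Phi_{p}$ is exactly the quantity you cannot control; enlarging $N$ or iterating Remark 8 does not help, since $g$ changes with each application. The paper's proof of Lemma 3 confronts this difficulty by an entirely different mechanism: in $K/I_{d}$ it separates the finite set $M$ of sums $\sum z_{i}\overline{a}^{i}$ with $\left|{z_{i}}\right|<k_{0}^{i}$ from the subring $A=\sum Z\cdot k_{0}^{i}\overline{a}^{i}$, extracts the congruence $\chi\left({k_{0}^{t}\overline{a}^{s}}\right)\in\chi\left({A}\right)$ from finiteness, and runs an induction pushing the high-degree coefficients into ever higher powers of the ideal generated by $\{k_{0},\dots,k_{m-1}\}$ until a nonzero element of $M$ is forced to vanish. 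That argument is the actual content of the lemma; your proposal correctly isolates where the difficulty sits but supplies neither a working construction nor a substitute for it.
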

\begin{proof}
Пусть $K$  ---  финитно отделимое кольцо и $a\in K$. Покажем, что $a$  --- целокрутящийся элемент и его показатель кручения равен его алгебраической степени. Предположим противное. \\
{\bf По лемме 1} $a$  ---  алгебраический элемент и пусть его алгебраическая степень равна $n$, то есть $D\left|{a}\right|=n$ (см. пп. 4). По определению это означает, что для некоторого набора целых чисел $m_{0}, m_{1}, \dots, m_{n-1}$ выполняется равенство:
\begin{center}
 $m_{0}a^{n}+m_{1}a^{n-1}+\dots m_{n-1}a=0$, причем $m_{0}>0$.
\end{center}
Заметим, что $n>1$. В противном случае элемент $a$ был бы целокрутящимся и его показатель кручения был бы равен его алгебраической степени $1$, что противоречит предположению. Пусть $d=$Н.О.Д$\left({m_{0}, m_{1}, \dots, m_{n-1}}\right)$ и пусть $k_{i}=m_{i}:d\left({i=0, 1,\dots, n-1}\right)$. Тогда по определению Н.О.Д имеем Н.О.Д $\left({k_{0}, k_{1}, \dots, k_{n-1}}\right)=1$. Покажем, что $k_{0}=1$.

Предположим противное, то есть $k_{0}\ne 1$. Обозначим через $I_{d}=\{v\in K \mid dv=0\}$. Нетрудно видеть, что $I_{d}$  --- идеал в кольце $K$. По {\bf лемме 1} фактор кольцо $K/I_{d}$  --- финитно отделимо. Обозначим через $\overline{u}$ образ элемента $u\in K$ при каноническом гомоморфизме $K\rightarrow K/I_{d}$. На основании определений заключаем, что в $K/I_{d}$ имеет место:
\begin{equation*}
    k_{0}\overline{a}^{n}+k_{1}\overline{a}^{n-1}+\dots+ k_{n-1}\overline{a}=0
\end{equation*} и
\begin{equation}
    D\left|{\overline{a}}\right|=n \tag{*}.
\end{equation}

Тогда для некоторого натурального числа $m$, не превосходящего $n-1$, число $k_{m}$ не делится на некоторый простой делитель числа $k_{0}$ (в противном случае Н.О.Д чисел $k_{i}$ был бы отличен от 1). Далее будем считать, что $m$ выбрано наименьшим из возможных с указанным свойством. Перепишем равенство (*) в двух видах:
\begin{equation}
    k_{0}\overline{a}^{n}+k_{1}\overline{a}^{n-1}+\dots+ k_{m-1}\overline{a}^{n-m+1}=-k_{m}\overline{a}^{n-m}\dots -k_{n-1}\overline{a} \tag{1}.
\end{equation} и
\begin{equation}
     k_{0}\overline{a}^{n}=-k_{1}\overline{a}^{n-1}\dots -k_{n-1}\overline{a}  \tag{2}.
\end{equation}
Обозначим через $f\left({x}\right)$ и $\varphi \left({x}\right)$ следующие многочлены:
\begin{equation*}
    f\left({x}\right)= k_{0}x^{n}+k_{1}x^{n-1}+\dots+ k_{m-1}x^{n-m+1}
\end{equation*} и
 \begin{equation*}
      \varphi \left({x}\right)=-k_{m}x^{n-m}\dots -k_{n-1}x.
 \end{equation*}
Обозначим через $A$ подмножество кольца $K/I_{d}$ определяемое равенством:
\begin{equation*}
     A=Z\cdot \left\{{k_{0}\overline{a}}\right\}+Z\cdot \left\{{k^{2}_{0}\overline{a}^{2}}\right\}+\dots + Z\cdot \left\{{k^{n-1}_{0}\overline{a}^{n-1}}\right\}.
\end{equation*}

Ясно, что $A$ замкнуто по сложению (вычитанию), то есть $A$ аддитивная подгруппа кольца $K/I_{d}$. Из равенства $\left({2}\right)$ следует замкнутость $A$ по умножению. Чтобы в этом убедиться достаточно проверить $k_{0}\overline{a}\cdot k^{n-1}_{0}\overline{a}^{n-1}\in A$. Действительно,\begin{equation*}
    k_{0}\overline{a}\cdot k^{n-1}_{0}\overline{a}^{n-1}=k^{n-1}_{0}k_{0}\overline{a}^{n}=k^{n-1}_{0}\left({-k_{1}\overline{a}^{n-1}\dots -k_{n-1}\overline{a}}\right)\in A.
\end{equation*}
Следовательно, $A$  --- подкольцо кольца $K/I_{d}$. Заметим, что $A$ содержит все элементы вида $k^{s}_{0}\overline{a}^{s}$, для любого натурального $s$. Обозначим через $M$ подмножество кольца $K/I_{d}$ определяемое равенством:\begin{equation*}
    M=\{\overline{b}=z_{n-1}\overline{a}^{n-1}+\dots+ z_{i}\overline{a}^{i}+ \dots+ z_{1}\overline{a} \mid \left|{z_{i}}\right|<k^{i}_{0}\left({i=1, \dots, n-1}\right)\}
\end{equation*}
Отметим, что $M$  --- конечное множество. Пусть $M^{* }$  --- множество ненулевых элементов из $M$. Покажем, что пересечение $M^{* }\cap A=\left\{{0}\right\}$. Действительно, в противном случае получилось бы $D\left|{\overline{a}}\right|<n$, что противоречит $\left({* }\right)$. Поскольку кольцо $K/I_{d}$ финитно отделимо, то по лемме 1 существует конечное кольцо $\Omega $ и гомоморфизм $\chi:K/I_{d}\rightarrow \Omega $ такой, что $\chi \left({M^{* }}\right)\cap \chi \left({A}\right)=\emptyset $. Тогда в конечном кольце существуют натуральные числа $t$ и $h$ такие, что $\chi \left({k^{t}_{0}\overline{a}}\right)=\chi \left({k^{t+h}_{0}\overline{a}}\right)$. Из последнего равенства следует $\chi\left({k^{t}_{0}\overline{a}}\right)=\chi \left({k^{t+sh}_{0}\overline{a}}\right)$ для любого натурального числа $s$, откуда следует \begin{equation*}
    \chi \left({k^{t}_{0}\overline{a}^{s}}\right)=\chi \left({k^{t+sh}_{0}\overline{a}^{s}}\right)=\chi \left({k^{t+sh-s}_{0}k^{s}_{0}\overline{a}^{s}}\right)=k^{t+sh-s}_{0}\chi \left({k^{s}_{0}\overline{a}^{s}}\right)
\end{equation*}
Как отмечено выше, $k^{s}_{0}\overline{a}^{s}\in A$. Поэтому из последнего равенства вытекает \begin{equation*}
    \chi \left({k^{t}_{0}\overline{a}^{s}}\right)\in \chi \left({A}\right) \tag{3}
\end{equation*} для любого натурального числа $s$. Пусть $L=\{k_{0}, k_{1}, \dots, k_{m-1}\}$. Пусть $q$  --- некоторое натуральное число.
Обозначим через $Z\left({L^{q}}\right)$  --- идеал кольца целых чисел, порожденный множеством $L^{q}=L\cdot L\cdot L\dots\cdot L$ ( $q$ сомножителей, см. пп. 9). Для дальнейших рассуждений докажем следующую импликацию:
\begin{flushleft}
Если для некоторых целых чисел $c_{i }\in Z\left({L^{q}}\right)\left({i>n-m}\right)$ и для некоторого целого числа $z$ имеет место равенство:
\end{flushleft}

\begin{center}
    $\sum\limits_{i>n-m}{c_{i}\overline{a}^{i}}=k^{z}_{m}\varphi \left({\overline{a}}\right)+r\left({\overline{a}}\right)$
\end{center} для некоторого многочлена с целыми коэффициентами $r\left({x}\right)$ без свободного члена степени меньшей $n-m$, то имеет место аналогичное равенство:
\begin{center}
     $\sum\limits_{i>n-m}{c^{'}_{i}\overline{a}^{i}}=k^{z'}_{m}\varphi \left({\overline{a}}\right)+r'\left({\overline{a}}\right)$
\end{center}
 для некоторых целых чисел $c^{'}_{i }\in Z\left({L^{q+1}}\right)\left({i>n-m}\right)$ и для некоторого целого числа $z'$ и некоторого многочлена с целыми коэффициентами $r'\left({x}\right)$ без свободного члена степени меньшей $n-m$.
Для доказательства импликации заметим следующее:\\
а) Для некоторой достаточно большого натурального числа $l$ в кольце многочленов с целыми коэффициентами (от одного переменного) возможно деление с остатком.
\begin{center}
     $k^{l}_{m}x^{i}=U_{i}\left({x}\right)\varphi \left({x}\right)+R_{i}\left({x}\right)$ (для всех $i>n-m$, для которых $c_{i}\ne 0$)
\end{center}
По определению деления с остатком степень многочлена остатка $R_{i}\left({x}\right)$ меньше степени делителя, то есть меньше $n-m$. Отметим, что многочлен $R_{i}\left({x}\right)$ без свободного члена.\\
б) Из условия импликации следует путем умножения на $k^{l}_{m}$:
\begin{equation*}
     \sum\limits_{i>n-m}{c_{i}k^{l}_{m}\overline{a}^{i}}=k^{z+l}_{m}\varphi \left({\overline{a}}\right)+k^{l}_{m}r\left({\overline{a}}\right),
\end{equation*}
 откуда на основании (а) получаем:
 \begin{equation*}
      \sum\limits_{i>n-m}{c_{i}\left({U_{i}\left({\overline{a}}\right)\varphi \left({\overline{a}}\right)+R_{i}\left({\overline{a}}\right)}\right)}=k^{z+l}_{m}\varphi \left({\overline{a}}\right)+k^{l}_{m}r\left({\overline{a}}\right)
 \end{equation*}\\
в) Из равенства $\left({1}\right)$ следует $\varphi \left({\overline{a}}\right)=f\left({\overline{a}}\right)$. Поэтому заменяя в предыдущем равенстве $\varphi \left({\overline{a}}\right)$ на $f\left({\overline{a}}\right)$, получаем:
\begin{equation*}
     \sum\limits_{i>n-m}{c_{i}\left({U_{i}\left({\overline{a}}\right)f\left({\overline{a}}\right)+R_{i}\left({\overline{a}}\right)}\right)}=k^{z+l}_{m}\varphi \left({\overline{a}}\right)+k^{l}_{m}r\left({\overline{a}}\right)
\end{equation*}

Коэффициенты многочлена $f\left({x}\right)$ принадлежат множеству $L$ (по определению). Поэтому коэффициенты многочлена $c_{i}\left({U_{i}\left({x}\right)f\left({x}\right)}\right)$ принадлежат множеству $Z\left({L^{q+1}}\right)$, поскольку по условию все $c_{i }\in Z\left({L^{q}}\right)$. Чтобы доказать рассматриваемую импликацию, осталось в левой части последнего равенства раскрыть скобки и перенести все $c_{i}R_{i}\left({\overline{a}}\right)$ в правую часть. Итак, импликация доказана.

Из этой импликации методом математической индукции приходим к выводу: для любого натурального числа $q$ и для некоторых целых чисел $c_{i }\in Z\left({L^{q}}\right)\left({i>n-m}\right)$ и для некоторого целого числа $z$ имеет место равенство:
\begin{center}
    $\sum\limits_{i>n-m}{c_{i}\overline{a}^{i}}=k^{z}_{m}\varphi \left({\overline{a}}\right)+r\left({\overline{a}}\right)$
\end{center} для некоторого многочлена с целыми коэффициентами. $r\left({x}\right)$ без свободного члена степени меньшей $n-m$. Базой индукции является равенство $\left({1}\right)$. Заметим, что

\begin{center}
    $k^{z}_{m}\varphi \left({x}\right)+r\left({x}\right)=q_{1}x+q_{2}x^{2}+\dots+ q_{n-m}x^{n-m}$
\end{center} для некоторых целых чисел $q_{j}$ ($j=1,\dots, n-m$), причем $q_{n-m}=-k^{z+1}_{m}$. Поделим каждое число $q_{j}$ с остатком на число $k^{j}_{0}$ и получим равенства: $q_{j}=p_{j}k^{j}_{0}+q^{* }_{j}$, где $0\le q^{* }_{j}<k^{j}_{0}$ ($j=1, \dots, n-m$). Заметим, что $ q^{* }_{n-m}>0$, поскольку $k^{z+1}_{m}$ не делится на $k_{0}$.\\ Имеем \begin{equation*}
    k^{z}_{m}\varphi\left({\overline{a}}\right)+r\left({\overline{a}}\right)=\sum\limits_{j=1}^{n-m}{\left({p_{j}k^{j}_{0}+q^{* }_{j}}\right)\overline{a}}^{j} =\sum\limits_{j=1}^{n-m}{p_{j}k^{j}_{0}\overline{a}}^{j}+\sum\limits_{j=1}^{n-m}{q^{* }_{j}\overline{a}}^{j}
\end{equation*} \\ Получаем: \begin{equation*}
    \chi \left({\sum\limits_{i>n-m}{c_{i}\overline{a}^{i}}}\right)=\chi \left({k^{z}_{m}\varphi \left({\overline{a}}\right)+r\left({\overline{a}}\right)}\right)
\end{equation*}
Последнее равенство перепишем следующим образом:
\begin{equation}
    \chi \left({\sum\limits_{j=1}^{n-m}{p_{j}k^{j}_{0}\overline{a}}^{j}}\right)+\chi \left({\sum\limits_{j=1}^{n-m}{q^{* }_{j}\overline{a}}^{j}}\right)   \tag{4}
\end{equation}
Для достаточно большого числа $q$ любое число $c_{i}$ из идеала $Z\left({L^{q}}\right)$ делится на $k^{t}_{0}$. Поэтому на основании $\left({3}\right)$ заключаем $\chi \left({\sum\limits_{i>n-m}{c_{i}\overline{a}^{i}}}\right)\in \chi \left({A}\right)$.

На основании определения $A$ имеем $\left({\sum\limits_{j=1}^{n-m}{p_{j}k^{j}_{0}\overline{a}}^{j}}\right)\in A$.

Поэтому из равенства $\left({4}\right)$ получаем $\chi \left({\sum\limits_{j=1}^{n-m}{q^{* }_{j}\overline{a}}^{j}}\right)\in A$.

На основании определения множества $M$ имеем $\sum\limits_{j=1}^{n-m}{q^{* }_{j}\overline{a}}^{j}\in M$.

Поскольку $\chi \left({M^{* }}\right)\cap \chi \left({A}\right)=\emptyset $, то заключаем $\sum\limits_{j=1}^{n-m}{q^{* }_{j}\overline{a}}^{j}=0$.

Но тогда получилось бы $D\left|{\overline{a}}\right|\le n-m<n$, что противоречит $\left({* }\right)$.

Противоречие получилось из предположения $k_{0}\ne 1$. Это означает, что $k_{0}=1$, то есть $a$  --- целокрутящийся элемент и его показатель кручения равен его алгебраической степени. Лемма 3 доказана.
\end{proof}

\begin{proclaim}{Замечание 9}
Мы доказали, что в кольце $K$ выполнено равенство \\ $d\cdot \left({a^{n}+k_{1}a^{n-1}+\dots k_{n-1}a}\right)=0$.
\end{proclaim}

\begin{lemma}
Каждый элемент финитно отделимого кольца имеет конечное целое кручение, свободное от квадратов.
\end{lemma}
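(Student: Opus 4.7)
The plan is to argue by contradiction. By Lemma~3 we already know that $\tau_a<\infty$, so only the squarefree property remains to be established. Suppose that $p^2\mid\tau_a$ for some prime $p$, and fix a monic polynomial $\tilde f(x)$ (no free term) with $\tau_a\tilde f(a)=0$. Introduce the auxiliary element
\[
c := \bigl(\tau_a/p\bigr)\,\tilde f(a)\in K.
\]
The overall strategy is to use $c$ to manufacture a monic polynomial $\tilde h(x)$ without free term satisfying $(\tau_a/p)\,\tilde h(a)=0$, which would contradict the minimality of $\tau_a$.

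First I would verify three elementary properties of $c$: (i) $c\ne 0$, since otherwise $(\tau_a/p)\tilde f(a)=0$ already contradicts the minimality of $\tau_a$; (ii) $pc=\tau_a\tilde f(a)=0$; and (iii) $c^2=0$. The last property is where the hypothesis $p^2\mid\tau_a$ is decisive --- writing $\tau_a=p^{\alpha}m$ with $\alpha\ge 2$, one computes
\[
c^2 = p^{2(\alpha-1)}m^2\,\tilde f(a)^2 = p^{\alpha-2}m\cdot\bigl(p^{\alpha}m\,\tilde f(a)\bigr)\cdot\tilde f(a) = 0.
\]
So $c$ is a nonzero nilpotent $p$-torsion element lying in the commutative subring $Z\langle a\rangle$, which by Lemma~1 is finitely separable.

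Next, apply Lemma~2(2), summarised in Remark~8, to the commuting pair $(a,c)$ with the prime $p$: this produces integer polynomials $f(x)$ and $g(x,y)$, both without free term, with $cf(a)=c^2g(a,c)$ and not all coefficients of $f(x)$ divisible by $p$. Since $c^2=0$, the identity collapses to $cf(a)=0$. Because $pc=0$, replacing the coefficients of $f$ by their residues modulo $p$ preserves this identity, so we may assume $f$ has coefficients in $\{0,\dots,p-1\}$ and --- after dropping vanishing top terms --- that $f$ has degree $D\ge 1$ with leading coefficient $\ell\in\{1,\dots,p-1\}$. Choose $u,v\in\mathbb{Z}$ with $u\ell+vp=1$ (Bezout) and set $f'(x) := u f(x) + v p\,x^{D}$; then $f'$ is monic of degree $D$, has no free term, and $cf'(a)=u\,cf(a)+v(pc)a^{D}=0$. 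The polynomial $\tilde h(x):=\tilde f(x)f'(x)$ is then monic (as a product of monics) and without free term, and $(\tau_a/p)\,\tilde h(a) = (\tau_a/p)\tilde f(a)\cdot f'(a) = cf'(a) = 0$, contradicting the minimality of $\tau_a$. Hence $\tau_a$ is squarefree.

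The principal obstacle is finding the right auxiliary element $c$: the hypothesis $p^2\mid\tau_a$ is used exactly to force $c^2=0$, and this nilpotency is what degenerates the Remark~8 identity from a quadratic-in-$c$ relation into the linear identity $cf(a)=0$. Everything that follows --- the mod-$p$ normalisation of $f$ and the Bezout adjustment --- is routine bookkeeping needed to convert an integer-coefficient relation into one governed by a monic polynomial.
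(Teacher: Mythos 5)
Your argument is correct, but it takes a genuinely different route from the paper's. The paper fixes $\tau_a=p^2q$, passes to the quotient $K/I_q$, sets $y=f(\overline{a})$ with $p^2y=0$, and then re-runs the finite-ring idempotent trick from scratch on $y$: it builds the subring $A$ generated by the elements $y_n=py+p\left({y^{2n}-y^{n}}\right)$, shows $py\in A$ by finite separability, extracts a relation $p\,g\left({y}\right)=0$ with $g$ monic, and composes to get $p\,h\left({\overline{a}}\right)=0$ with $h=g\circ f$ monic, whence $pq\,h\left({a}\right)=0$ in $K$. You instead introduce the nilpotent $p$-torsion element $c=\left({\tau_a/p}\right)\widetilde{f}\left({a}\right)$ and outsource the idempotent mechanism entirely to Lemma~2(2)/Замечание~8 applied to the commuting pair $\left({a,c}\right)$: the identity $c\,f\left({a}\right)=c^{2}g\left({a,c}\right)$ collapses to $c\,f\left({a}\right)=0$ because $c^2=0$ (this is exactly where $p^2\mid\tau_a$ enters), and the Bezout normalisation modulo $p$ then produces a monic $f'$ with $\left({\tau_a/p}\right)\widetilde{f}\left({a}\right)f'\left({a}\right)=0$, contradicting minimality of $\tau_a$. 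Both proofs ultimately rest on the same "some power of an element of a finite ring is idempotent" device, but yours reuses it through the already-proved two-element statement rather than repeating it, works entirely inside $K$ (no passage to $K/I_q$ and lifting back), and ends with a product $\widetilde{f}\cdot f'$ of monic polynomials where the paper ends with a composition $g\circ f$. The one point worth making explicit is that Замечание~8 requires the two elements to commute (its proof needs $b$ to commute past polynomials in $a$), which holds here since $c\in Z\left<{a}\right>$; you use this silently but correctly.
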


\begin{proof}
 Пусть $K$ финитно отделимое кольцо, $a\in K$. Тогда по лемме 3 элемент $a$  целокрутящийся, то есть его целое кручение $\tau _{a}$ конечно (см. пп. 6). Покажем, что $\tau _{a}$  --- свободное от квадратов. Предположим противное. Тогда $\tau _{a}=p^{2}q$ для некоторого простого числа $p$ и некоторого натурального числа $q$. По определению целого кручения (см. пп. 6) $p^{2}q\cdot f\left({a}\right)=0$, где $f\left({x}\right)$  --- некоторый унитарный многочлен с целыми коэффициентами. Через $\overline{a}$ обозначим образ элемента $a$ при каноническом гомоморфизме $K\rightarrow K/I_{q}$, где $I_{q}$  --- идеал кручения (см. пп. 13).

Далее обозначим через $y$ элемент кольца $K/I_{q}$, определяемый равенством $y=f\left({\overline{a}}\right)$. Заметим, что в кольце $K/I_{q}$ выполнено равенство $p^{2}y=0$, поскольку $p^{2}f\left({a}\right)\in I_{q}$. Пусть $n$  --- какое-либо натуральное число. Через $y_{n}$ обозначим элемент кольца $K/I_{q}$, определяемый равенством \begin{center}
    $y_{n}=py+p\left({y^{2n}-y^{n}}\right)$.
\end{center}

Пусть $A$  --- аддитивная подгруппа кольца $K/I_{q}$, порожденная семейством элементов $y_{n}$. Нетрудно видеть, что $A$  --- замкнуто по умножению, поскольку произведение любых элементов из $A$ равно $0$ (это следует из равенства $p^{2}y=0$). Вывод: $A$  --- подкольцо кольца $K/I_{q}$. Далее возможны два случая: $py\not\in A$ или $py\in A$.

Если $py\not\in A$, то в силу финитной отделимости кольца $K/I_{q}$ по лемме 1 имеем $\varphi \left({py}\right)\not\in \varphi \left({A}\right)$ для некоторого конечного кольца $F$ и гомоморфизма $\varphi:K/I_{q}\rightarrow F$. Но в конечном кольце для каждого элемента некоторая степень является идемпотентом. Поэтому $\varphi \left({y}\right)^{2n}=\varphi \left({y}\right)^{n}$ для некоторого $n$. Это означает, что $\varphi \left({y^{2n}-y^{n}}\right)=0$, откуда следует $\varphi \left({y_{n}}\right)=\varphi \left({py}\right)$, то есть $\varphi \left({py}\right)\in \varphi \left({A}\right)$. Противоречие. Это означает, что случай $py\not\in A$ не имеет места.

Поэтому $py\in A$, откуда $py=\sum\limits_{i}{k_{i}y_{i}}$ для некоторого семейства целых чисел $k_{i}$. Из этой суммы можно отбросить слагаемые, в которых $k_{i}$ делятся на $p$ (поскольку, как отмечено выше, $p^{2}y=0$, что влечет $py_{i}=0$).

Пусть $k_{n}$ ненулевой коэффициент в рассматриваемой сумме с наибольшим номером $n$ Поскольку $k_{n}$ не делится на простое число $p$, то Н.О.Д $\left({k_{n}, p}\right)=1$. По теореме Евклида о наибольшем общем делителе выполняется равенство $k_{n}\cdot l+p\cdot z=1$ для некоторых целых чисел $l, z$. Тогда из равенства $py=\sum\limits_{i}{k_{i}y_{i}}$ путем умножения на $l$ получаем:\begin{center}
    $lpy=\sum\limits_{i<n}{lk_{i}y_{i}}+lk_{n}y_{n}$,
\end{center}  откуда следует:
\begin{center}
     $\left({1-pz}\right)y_{n}+\sum\limits_{i<n}{lk_{i}y_{i}}-lpy=0$.
\end{center}
Из последнего равенства получаем:
\begin{center}
     $y_{n}+\sum\limits_{i<n}{lk_{i}y_{i}}-lpy=0$,
\end{center} поскольку, как отмечалось выше, $py_{n}=0$. Это означает, что \begin{center}
   $ p\left({y^{2n}-y^{n}+\sum\limits_{i<n}{lk_{i}\left({y+y^{2i}-y^{i}}\right)}-ly+y}\right)=0$
\end{center} 
Обозначим через $g\left({x}\right)$ унитарный многочлен со старшим членом $x^{2n}$: \begin{center}
    $g\left({x}\right)=x^{2n}-x^{n}+\sum\limits_{i<n}{lk_{i}\left({x+x^{2i}-x^{i}}\right)}-lx+x$.
\end{center} Из последнего равенства с участием $y$ следует $pg\left({f\left({\overline{a}}\right)}\right)=0$ в кольце $K/I_{q}$, то есть $p\cdot gf\left({\overline{a}}\right)=0$. Пусть $h\left({x}\right)=g\left({f\left({x}\right)}\right)$. Ясно, что $h\left({x}\right)$  --- унитарный многочлен (как композиция унитарных). Имеем $ph\left({\overline{a}}\right)=0$ в кольце $K/I_{q}$, откуда
$qph\left({a}\right)=0$ в кольце $K$. Полученное означает, что целое кручение элемента $a $ не превосходит $pq$. Но это противоречит равенству $\tau _{a}=p^{2}q$. Противоречие получено из предположения о том, что кручение элемента $a$ делится на квадрат простого числа. Вывод: кручение элемента $a$  --- свободно от квадратов. Лемма 4 доказана.
\end{proof}
\begin{lemma}
 Если аддитивная группа кольца конечно порождена, то это кольцо финитно отделимо.
\end{lemma}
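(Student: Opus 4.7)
Идея следующая: я буду искать двусторонний идеал $I\subset K$ конечного индекса, для которого $a\not\in A'+I$; тогда канонический гомоморфизм $\varphi: K\rightarrow K/I$ даст требуемое, поскольку $K/I$ конечно, а включение $\varphi(a)\in\varphi(A')$ эквивалентно $a\in A'+I$. В качестве $I$ естественно попробовать $dK=\{du\mid u\in K\}$ для подходящего натурального $d$.

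Сначала проверю, что для любого натурального $d$ множество $dK$ является двусторонним идеалом. Замкнутость по сложению очевидна, а для любого $v\in K$ имеют место равенства $v\cdot(du)=d(vu)\in dK$ и $(du)\cdot v=d(uv)\in dK$. Далее замечу, что $K/dK$ конечно: его аддитивная группа --- гомоморфный образ конечно порождённой абелевой группы $K^{+}$, аннулируемый умножением на $d$, то есть конечно порождённая абелева группа экспоненты, делящей $d$, а такие группы конечны.

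Основная (и, по существу, единственная содержательная) часть плана --- выбор $d$, обеспечивающего $a\not\in A'+dK$. Для этого рассмотрю аддитивную фактор-группу $G=K^{+}/A'$; как гомоморфный образ $K^{+}$, она конечно порождена. По структурной теореме о конечно порождённых абелевых группах $G\cong Z^{r}\oplus T$, где $T$ --- конечная группа кручения. Далее я установлю, что $\bigcap_{d\ge 1}dG=\{0\}$: проекция любого элемента этого пересечения на $Z^{r}$ --- целое число, делящееся на всякое $d$, то есть ноль, а часть в $T$ зануляется уже при $d=|T|$. Применяя это к ненулевому образу $\overline{a}$ элемента $a$ в $G$, получу нужное $d$, для которого $\overline{a}\not\in dG$, то есть $a\not\in A'+dK$.

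Наконец, положив $I=dK$, я получу конечное фактор-кольцо $K/I$ и требуемое неравенство $a\not\in A'+I$, что завершит доказательство финитной отделимости $K$. Никаких препятствий, связанных с возможной некоммутативностью $K$, не возникает, так как $dK$ по построению двусторонний идеал, а конечность $K/dK$ использует только конечную порождённость $K^{+}$. Единственной содержательной трудностью я считаю шаг о тривиальности пересечения $\bigcap_{d}dG$, сводящийся к применению структурной теоремы для конечно порождённых абелевых групп; всё остальное --- рутинная проверка определений.
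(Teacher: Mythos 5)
Ваше доказательство корректно и по существу совпадает с доказательством в статье: там тоже выбирается натуральное число $q$, для которого образ $a$ в конечно порождённой абелевой группе $K/A$ не делится на $q$, после чего берётся двусторонний идеал $qK$ и используется конечность фактор-кольца $K/qK$. Единственное стилистическое отличие --- вы явно выводите тривиальность пересечения $\bigcap_{d}dG$ из структурной теоремы, тогда как в статье тот же факт формулируется как неделимость ненулевого элемента на некоторое натуральное число; содержательной разницы нет.
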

\begin{proof}
Пусть аддитивная группа кольца $K$ конечно порождена. Пусть $a\in K$, $A$  ---  подкольцо и $a\not\in A$. Обозначим через $\widetilde{a}$ образ элемента $a$ при каноническом гомоморфизме аддитивных абелевых групп $K\rightarrow K/A$. По условию $\widetilde{a}\ne 0$ в группе $K/A$ и эта группа конечно порождена (как гомоморфный образ конечнопорожденной). Как известно, конечно порожденная абелева группа является прямой суммой конечного числа циклических групп. Отсюда вытекает, что для любого ненулевого элемента конечно порожденной абелевой группы существует натуральное число на который этот элемент не делится. Итак, существует натуральное число $q$ для которого $\widetilde{a}\not\in q\cdot K/A$. Это означает, что в $K$ имеет место $a\not\in qK+A$. Заметим, что $qK$  --- двусторонний идеал в кольце $K$. Пусть $f:K\rightarrow K/qK$ канонический гомоморфизм колец, Из $a\not\in qK+A$ вытекает $f\left({a}\right)\not\in f\left({A}\right)$ Осталось заметить, что конечно порожденная периодическая абелева группа является конечной, в частности, $K/qK$  --- конечное множество. Лемма 5 доказана.
\end{proof}
\begin{lemma}
Если $a$  --- целый алгебраический элемент кольца $K$, то моногенное кольцо $Z\left<{a}\right>$ финитно отделимо.
\end{lemma}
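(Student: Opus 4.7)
The plan is to reduce this statement directly to Lemma 5 by showing that the additive group of $Z\langle a\rangle$ is finitely generated.

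First, since $a$ is an integer algebraic element (see item 2 in the Definitions), there is a unitary polynomial with integer coefficients and zero constant term which vanishes at $a$, i.e.\ there exist integers $c_1,\dots,c_{n-1}$ such that
\begin{equation*}
a^n = -c_1 a^{n-1} - c_2 a^{n-2} - \dots - c_{n-1} a.
\end{equation*}
I would use this monic relation to express every power $a^k$ with $k\ge 1$ as an integer linear combination of $a, a^2, \dots, a^{n-1}$. The argument is a straightforward induction on $k$: the base $k\le n$ is immediate, and for $k\ge n$ multiplying a known representation of $a^{k-1}$ by $a$ produces a term $a^n$ which is then rewritten by the displayed equation. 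No fractions appear, since the leading coefficient is $1$, which is precisely the reason integrality is needed (without it one would only get rational-combination reductions, killing the argument).

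Next, since $Z\langle a\rangle$ consists of integer linear combinations of powers of $a$ (without free term, in our convention), the previous step shows
\begin{equation*}
Z\langle a\rangle = \mathbb{Z}\cdot a + \mathbb{Z}\cdot a^{2} + \dots + \mathbb{Z}\cdot a^{n-1},
\end{equation*}
so the additive group of $Z\langle a\rangle$ is finitely generated (as a homomorphic image of $\mathbb{Z}^{n-1}$; it may have torsion, but that is harmless for what follows).

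Finally, I invoke Lemma 5: any ring with finitely generated additive group is finitely separable. Applied to $Z\langle a\rangle$, this gives the claim. I do not expect a genuine obstacle here; the only subtle point is the role of the word \emph{integer} (unitary minimal polynomial) in the hypothesis — dropping it would force us to work with denominators and the reduction of powers would take us outside $Z\langle a\rangle$, so the clean finite generation of the additive group would fail.
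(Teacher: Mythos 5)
Your proposal is correct and coincides with the paper's own proof: the paper likewise observes that integrality of $a$ makes the additive group of $Z\left<{a}\right>$ finitely generated (spanned by $a, a^{2}, \dots, a^{n-1}$) and then invokes Lemma 5. You merely spell out the induction on powers that the paper leaves implicit.
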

 \begin{proof}
  Достаточно заметить, что если $a$  --- целый алгебраический элемент кольца, то аддитивная группа кольца $Z\left<{a}\right>$ конечно порождена. Тогда можно применить лемму 5. Лемма 6 доказана.
 \end{proof}
\begin{proclaim}{Предложение 2}
 Пусть в конечно порожденном кольце $K$ есть идеал $I$, такой, что аддитивная фактор-группа $K/I$ конечно порождена. Тогда $I$ является конечно порожденным кольцом.
\end{proclaim}

\begin{proof}
Пусть в конечно порожденном кольце $K$ есть идеал $I$, такой, что аддитивная фактор-группа $K/I$ конечно порождена. Пусть $\overline{a}$  --- образ $a\in K$ при каноническом гомоморфизме $K\rightarrow K/I$, $\overline{e_{i}}$ ($i=1, \dots, n, e_{i}\in K$) --- система образующих абелевой группы $K/I$. Из определений вытекает, что\begin{center}
    $K=\sum\limits_{i=1}^{n}{Z}\cdot e_{i}+I$.
\end{center}  Пусть $\{a_{1}, \dots a_{m}\}$  ---  система образующих кольца $K$. Заметим, что

(i) $e_{i}\cdot e_{j}=\left({\sum\limits_{k=1}^{n}{z_{i, j, k}e_{k}}}\right)+b_{i, j}$ для некоторых $z_{i, j, k}\in Z$ и $b_{i, j}\in I$ .

(ii) $a_{q}=\left({\sum\limits_{k=1}^{n}{z_{i}e_{i}}}\right)+b_{q}$ для некоторых $z_{k}\in Z$ и $b_{k}\in I$, $q=1, 2, \dots, m$. \\
Пусть $I^{* }$  --- подкольцо кольца $K$, порожденное конечным семейством элементов:
\begin{center}
     $M=\{b_{q}, b_{i, j}, e_{i}b_{q}, b_{q}e_{i}, e_{k}b_{i, j}, b_{i, j}e_{k} \mid 1\le i, j, k\le n, 1\le q\le m\}. $
\end{center}
Поскольку $I$  --- идеал в $K$, то $I^{* }\subset I$. Нетрудно заметить, что $a_{i}I^{* }\subset I^{* }$ и $I^{* }a_{i}\subset I^{* }$, то есть $I^{* }$  --- идеал в кольце $K$. Из определений вытекает, что $K=\sum\limits_{i=1}^{n}{Z}\cdot e_{i}+I^{* }$. Это означает, что аддитивная фактор-группа $K/I^{* }$ конечно порождена. Тогда конечно порождена аддитивная фактор-группа $I/I^{* }$ (как подгруппа конечно порожденной коммутативной $K/I^{* }$). Пусть $\widetilde{i_{1}}, \widetilde{i_{2}}, \dots, \widetilde{i_{l}}$  --- конечная система образующих аддитивной группы $I/I^{* }$ (где $\widetilde{i}$  --- образ элемента $i\in I$ при каноническом гомоморфизме), тогда семейство $i_{1}, \dots, i_{l}$ вместе с конечным семейством $M$ образующих кольца $I^{* }$ является порождающим множеством кольца $I$. Предложение 2 доказано.
\end{proof}
\begin{proclaim}{Предложение 3}
 Пусть в конечно порожденном кольце $K$ есть идеал $I$, такой, что аддитивная группа фактор-кольца $K/I$ конечно порождена. Пусть идеал $I$ сам является финитно отделимым кольцом. Тогда кольцо $K$  финитно отделимо.
\end{proclaim}
\begin{proof}
Пусть аддитивная группа кольца $K/I$ конечно порождена и $I$  --- идеал кольца $K$, сам являющийся финитно отделимым кольцом. Пусть $a\in K$ и $a\not\in A$, где $A$  --- некоторое подкольцо кольца $K$.

Пусть $a\in I$ Тогда $a\not\in A\cap I$. Поскольку $I$ --- финитно отделимое кольцо и $A\cap I$  --- подкольцо в $I$, то существует идеал $I'$ в кольце $I$ такой, что фактор $I/I'$  --- конечное кольцо и $\overline{a}\not\in \overline{A\cap I}$, где $\overline{a}$  --- образ $a$ при каноническом гомоморфизме $I\rightarrow I/I'$. Это означает $a\notin \left({A\cap I}\right)+I'$ .

Пусть $I^{*}$ --- идеал кольца $K$, порожденный множеством $II'I$. Заметим, что $II'I\subset I'$, то есть $I^{* }\subset I'$. По предложению 2 $I$  --- конечно порожденное кольцо. Тогда можно применить предложение 2 к кольцу $I$ и сделать вывод, что $I'$  ---  конечно порожденное кольцо. Заметим, что фактор--кольцо $I'/I^{* }$ является кольцом с нулевым умножением любых трех элементов и очевидно, что аддитивная группа конечно порожденного кольца с таким свойством конечно порождена. Расширение конечно порожденной группы при помощи конечно порожденной само является конечно порожденной группой. Поэтому аддитивная группа кольца $K/I^{* }$ конечно порождена.

Поскольку, как отмечено выше, $a\not\in \left({A\cap I}\right)+I'$, то $a\not\in \left({A\cap I}\right)+I^{* }$ (так как $I^{* }\subset I'$). Отсюда следует, что $a\notin A+I^{* }$. Действительно, в противном случае $a=b+i^{* }$ для некоторых $b\in A$, $i^{* }\in I^{* }\subset I$. Тогда $b=a-i^{* }\in I$, поскольку $a\in I$ (по условию рассматриваемого случая) и $i^{* }\in I^{* }\subset I$. Получаем $b\in A\cap I$, откуда $a\in \left({A\cap I}\right)+I^{* }$. Противоречие. Итак, $a\notin A+I^{* }$.

Поэтому $\widetilde{a}\notin \widetilde{A}$, где $\widetilde{a}$  --- образ $a$ при каноническом гомоморфизме $\theta :K\rightarrow K/I^{* }$. По лемме 5 кольцо $K/I^{* }$  финитно отделимо. Это означает, что элемент $\widetilde{a}$ можно отделить от подкольца $\widetilde{A}$ некоторым гомоморфизмом $f$ из $K/I^{* }$ в некоторое конечное кольцо $F$. Композиция канонического гомоморфизма $\theta $ и гомоморфизма $f$, то есть $\varphi =f\circ \theta:K\rightarrow F$ отделяет элемент $a$ от подкольца $A$, то есть $\varphi \left({a}\right)\not\in \varphi \left({A}\right)$.

Теперь рассмотрим второй возможный случай $a\notin I$. Если $a\notin A+I$, то повторя рассуждения, приведенные выше, получим, $\varphi \left({a}\right)\not\in \varphi \left({A}\right)$ для некоторого гомоморфизма $\varphi:K\rightarrow F$, где $F$  --- некоторое конечное кольцо.

Предположим $a\in A+I$. Тогда $a=b+i$ для некоторых $b\in A$, $i\in I$. Тогда $i\notin A$ (в противном случае получилось бы $a=b+i\in A$  --- противоречие).

По случаю, разобранному выше, существует такой идеал $I^{* }$ в кольце $K$, такой, что $I^{* }\subset I$ и аддитивная группа кольца $K/I^{* }$ конечно порождена. и $i\not\in A+I^{* }$.

Заметим, что $i=a-b$ и $b\in A+I^{* }$ (поскольку $b\in A$) Если бы $a\in A+I^{* }$, тогда и $a-b\in A+I^{* }$ (поскольку сумма подгрупп является подгруппой), то есть $i=a-b\in A+I^{* }$  --- противоречие. Следовательно, $a\not\in A+I^{* }$. С этого момента, повторяя рассуждения предыдущего случая (при котором $a\in I$), приходим к выводу, что  $\varphi \left({a}\right)\not\in \varphi \left({A}\right)$ для некоторого гомоморфизма $\varphi:K\rightarrow F$, где $F$  --- некоторое конечное кольцо. Предложение 3 доказано.
\end{proof}
\begin{proclaim}{Предложение 4}
Пусть $R$  ---  евклидово кольцо. Всякий подмодуль $N$ свободного $R$-модуля $M$ ранга $n$ является свободным $R$-модулем ранга $m\le n$, причем существуют такой базис $\left\{{e_{1}, e_{2}, \dots, e_{n}}\right\}$ модуля $M$ и такие \textup(ненулевые\textup) элементы $r_{i}\in R$ $(i=1, 2\dots n)$, что $\{r_{1}e_{1}, r_{2}e_{2}, \dots, r_{m}e_{m}\}$ --- базис подмодуля $N$ и $r_{i}$ делит $r_{i+1}$ ($i=1, 2\dots m-1$).
\end{proclaim}
\begin{proof}
Проведем методом математической индукции относительно ранга $n$. \\
1) Пусть $n=1$. Это означает, что $M=R\cdot e_{1}$. Тогда $N=I\cdot e_{1}$ для некоторого подмножества $I\subset R$. Очевидно, $I$  --- идеал в кольце $R$. Поскольку $R$  --- кольцо главных идеалов. то $I=\left({r_{1}}\right)$ для некоторого $r_{1}\in R$. Тогда $\{r_{1}e_{1}\}$  --- базис подмодуля $N$.

2) Пусть $n>1$ Предположим, что утверждение теоремы верно для свободных модулей рангов меньших $n$.

3) Зафиксируем произвольный базис $\left\{{e_{1}, e_{2}, \dots e_{n}}\right\}$ в модуле $M$. Каждый элемент $u$ модуля $M$ представим единственным образом в виде линейной комбинации элементов базиса $\left\{{e_{1}, e_{2}, \dots e_{n}}\right\}$, то есть \begin{center}
$u=a_{1, u}e_{1}+\cdot \cdot \cdot +a_{n, u}e_{n} (a_{i, u}\in R) $
\end{center}. Обозначим через $N_{i}=\{a_{i, u}\mid u\in N\}$. Возможно некоторые множества $N_{i}$ будут состоять из одного $0$. Без ограничения общности будем считать, что это происходит начиная с некоторого номера. То есть для некоторого $m\le n$ множества $N_{i}$ отличны от нуля, а $N_{k}=\{0\}$ для всех $k$ из промежутка $m<k\le n$. Если $m<n$, то $N$ содержится в свободном модуле с базисом $\left\{{e_{1}, e_{2}, \dots e_{m}}\right\}$. Ранг этого модуля (равен $m$) меньше $n$ и можно воспользоваться индукционным предположением.

Далее будем считать, что $m=n$, то есть все $N_{i}$ отличны от нуля. Из определения вытекает, что $N_{i}$  --- идеал в кольце $R$. Поскольку $R$  --- кольцо главных идеалов, то $N_{i}=\left({r_{i}}\right)$ для некоторого $r_{i}\in R$ и $r_{i}\ne 0$ ($i=1, \dots k$). Среди семейства $r_{i}$ (рассматриваемых по всевозможным базисам $\left\{{e_{1}, e_{2}, \dots e_{n}}\right\}$ в модуле $M$) есть элемент с наименьшей евклидовой нормой.

Без ограничения общности, будем считать, что $\left\{{e_{1}, e_{2}, \dots, e_{n}}\right\}$ --- такой базис и $r_{1}$  --- такой элемент. Из определений следует существование элемента $u\in N$ такого, что
\begin{equation*}
    u=r_{1}e_{1}+a_{2, u}e_{2}+\cdots+a_{n, u}e_{n}
\end{equation*}  
для некоторых $a_{i, u}\in R;$ $i=2, \dots, n$. \\
Пусть $N'=N\cap [e_{2}, \dots e_{n}]$. Из определений следует, что $[e_{2}, \dots, e_{n}]$  --- свободный модуль ранга $n-1$. По индукционному предположению в модуле $[e_{2}, \dots e_{n}]$ найдется такой базис $\{e^{'}_{2}, \dots e^{'}_{n}\}$, что $\{r^{'}_{2}e^{'}_{2}, \dots, r^{'}_{m}e^{'}_{m}\}$  ---  базис модуля $N'$ и $r^{'}_{i}$ делит $r^{'}_{i+1}$ ($i=2,\dots, m-1$)

Тогда \begin{equation*}
    u=r_{1}e_{1}+a^{'}_{2, u}e^{'}_{2}\cdot \cdot \cdot +a^{'}_{n, u}e^{'}_{n}
\end{equation*} для некоторых $a^{'}_{i, u}\in R;$ $i=2, \dots, n$.
Предположим, что $r_{1}$ делит не все элементы $a^{'}_{i, u}$. Пусть $a^{'}_{i, u}$ не делится на $r_{1}$. Тогда $a_{i, u}=xr_{1}+r$ для некоторого $x\in R$ и для некоторого $r\in R$ и $D\left({r}\right)<D\left({r_{1}}\right)$. Пусть $e^{'}_{1}=e_{1}+xe_{i}$. Рассмотрим новую систему элементов $\left\{{e^{'}_{1}, e_{2}, e_{i}\dots e_{n}}\right\}$ в модуле $M$. Легко видеть, что эта новая система является линейно независимой и порождает $M$, то есть является базисом. для модуля $M$ Заметим\begin{equation*}
    u=r_{1}e^{'}_{1}+a^{'}_{2, u}e^{'}_{2}+\cdots+\left({-x r_{1}+a^{'}_{i, u}}\right)e^{'}_{i}+\cdots+ a^{'}_{n, u}e^{'}_{n}.
\end{equation*} 
Но $-x r_{1}+a_{i, u}=r$ и евклидова норма $r$ меньше евклидовой нормы $r_{1}$. Это противоречит выбору $r_{1}$. Противоречие получено из предположения, что $r_{1}$ делит не все элементы $a^{'}_{i, u}$. Вывод $a^{'}_{i, u}=r_{1}a^{''}_{i, u}$ для некоторого $a^{''}_{i, u}\in R$ ($i=2,\dots,n$). Тогда \begin{equation*}
    u=r_{1}\left({e^{'}_{1}+a^{''}_{2, u}e^{'}_{2}+\cdots +a^{''}_{n, u}e^{'}_{n}}\right)
\end{equation*}\\ и пусть \begin{equation*}
    e^{''}_{1}=e^{'}_{1}+a^{''}_{2, u}e^{'}_{2}+\cdots +a^{''}_{n, u}e^{'}_{n}.
\end{equation*}

Рассмотрим новую систему элементов $\left\{{e^{''}_{1}, e^{'}_{2}, e^{'}_{i}\dots e^{'}_{n}}\right\}$ в модуле $M$. Легко видеть, что эта новая система является линейно независимой и порождает $M$, то есть является базисом для модуля $M$. Пусть $v\in N$ и тогда \begin{equation*}
    v=a^{'}_{1, v}e^{''}_{1}+a^{'}_{2, v}e^{'}_{2}\cdot \cdot \cdot +a^{'}_{m, v}e^{'}_{m}
\end{equation*} для некоторых $a^{'}_{i, u}\in R$ ($i=1, 2,\dots,n$)\\
Если бы $a^{'}_{1, v}$ не делилось на $r_{1}$ в кольце $R$, то $a^{'}_{1, u}=x'r_{1}+r'$ для некоторого $x' \in R$ и для некоторого $r'\in R$ и D$(r')<$  D$(r_{1})$, тогда $v-x'u\in N$ и $v-x'u = r'e''+w$, для некоторого $w\in N'$. Это противоречит выбору $r_{1}$. Следовательно $a'_{1,v}$ делится на $r_{1}$ в кольце $R$, то есть $a'_{1,v} = x'r_{1}$. Тогда $v-x'u\in N'=N\cap [e_{2}, \dots, e_{n}]$.
Получаем $\{r_{1}e^{'}_{1}, r^{'}_{2}e^{'}_{2}, \dots, r^{'}_{m}e^{'}_{m}\}$ — базис подмодуля $N$.

Повторяя приведенные выше рассуждения, убеждаемся, что $r^{'}_{2}$ делится на $r_{1}$. Предложение 4 доказано.
\end{proof}
\begin{proclaim}{Предложение 5}
  Если кольцо $K$ простой характеристики $p$ является конечно порожденным модулем над некоторым моногенным подкольцом $Z\left<{b}\right>$, то это кольцо $K$ финитно отделимо от подколец, содержащих $b$.
\end{proclaim}
\begin{proof}
 Пусть кольцо $K$ простой характеристики $p$ является конечно порожденным модулем над некоторым моногенным подкольцом $Z\left<{b}\right>$ и пусть $A$  ---  подкольцо и $a\not\in A$ и пусть $b$ некоторый элемент из $A$. Тогда $A$ является $Z\left<{b}\right>$--подмодулем $Z\left<{b}\right>$-модуля $K$. Обозначим через $\widetilde{a}$ образ элемента $a$ при каноническом гомоморфизме $Z\left<{b}\right>$--модулей: $K\rightarrow K/A$. По условию $\widetilde{a}\ne 0$ в модуле $K/A$ и этот модуль конечно порожден (как гомоморфный образ конечнопорожденного). Учитывая, что $K$  --- кольцо характеристики $p$, аддитивные группы $K$ и $K/A$ можно рассматривать как модули над кольцом многочленов $Z_{p}[x]$ с операцией $f\left({x}\right)* m=f\left({b}\right)\cdot m$ и эти модули конечно порождены. Как известно, конечно порожденный модуль над областью целостности является прямой суммой конечного набора циклических модулей (см. пп. 30). Отсюда вытекает, что для любого ненулевого элемента в кольце $K/A$, в частности $\widetilde{a}$, существует не нулевой многочлен $\varphi \left({x}\right)\in Z_{p}[x]$ на который этот элемент не делится, то есть \begin{equation*}
     \widetilde{a}\not\in \varphi\left({x}\right)* K/A.
 \end{equation*} Это означает, что в $K$ имеет место $a\not\in \varphi \left({x}\right)* K+A$. Заметим, что $\varphi \left({x}\right)* K$ является двусторонним идеалом в кольце $K$. Пусть $
     \gamma:K\rightarrow K/\left({\varphi \left({x}\right)* K}\right)$ канонический гомоморфизм колец. Из $a\not\in \varphi \left({x}\right)K+A$ вытекает $\gamma \left({a}\right)\not\in \gamma \left({A}\right)$. Осталось заметить, что кольцо $K/\left({\varphi \left({x}\right)* K}\right)$ конечно. Действительно, если $a_{1}, a_{2}\dots, a_{n}$ конечная система образующих модуля $Z\left<{b}\right>$--модуля $K$, то очевидно она является системой образующих $Z_{p}[x]$ модуля $K$. Откуда следует, что $\overline{a_{1}}, \overline{a_{2}}\dots, \overline{a_{n}}$  --- система образующих $Z_{p}[x]$ модуля $K/\left({\varphi \left({x}\right)* K}\right)$, где $\overline{a}_{i}$ образ элемента $a_{i}$ при каноническом гомоморфизме $Z_{p}[x]$--модулей: $K\rightarrow K/\left({\varphi \left({x}\right)* K}\right)$. $Z_{p}[x]$ модуль $K/\left({\varphi \left({x}\right)* K}\right)$ можно рассматривать как модуль над кольцом $Z_{p}[x]/I\left({\varphi \left({x}\right)}\right)$, где $I\left({\varphi \left({x}\right)}\right)$  --- главный идеал в кольце многочленов $Z_{p}[x]$, порожденный многочленом $\varphi \left({x}\right)$. Заметим, что фактор кольцо. $Z_{p}[x]/I\left({\varphi \left({x}\right)}\right)$ конечно. Но конечно порожденный модуль над конечным кольцом сам является конечным. Поэтому $K/\left({\varphi \left({x}\right)* K}\right)$  --- конечное множество. Предложение 5 доказано
\end{proof}
\begin{proclaim}{Предложение 6}
  Кольцо многочленов $Z_{p}[x]$ от одного переменного с коэффициентами из простого поля $Z_{p}$ финитно отделимо.
\end{proclaim}
\begin{proof}
Пусть $f\left({x}\right)\not\in A$, где $f\left({x}\right)$  --- некоторый многочлен из кольца $Z_{p}[x]$, а $A$ некоторое подкольцо кольца $Z_{p}[x]$. Если $A$ состоит многочленов нулевой степени, то $A$  --- конечное множество. Возможность финитного отделения любого элемента $f\left({x}\right)\not\in A$ от подкольца $A$ в этом случае следует из финитной аппроксимируемости конечно порожденных коммутативных колец, (см. \cite{kybl}).

Пусть в кольце $A$ есть некоторый многочлен $b=g\left({x}\right)$ и пусть степень многочлена $g\left({x}\right)$ равна $n$ и $n\ge 1$. Рассмотрим кольцо $Z_{p}[x]$ как модуль над моногенным подкольцом $Z<b>$.

Покажем, что этот модуль является конечно порожденным и множество $B=\{h\left({x}\right)\mid degh\left({x}\right)\le n\}$ (состоящее из всех многочленов $h\left({x}\right)$ из $Z_{p}[x]$ степени меньшей или равной $n$) является порождающим множеством этого модуля.

Пусть $[B]$  --- подмодуль модуля $Z_{p}[x]$ (как модуля над кольцом $Z<b>$). Покажем, что каждый многочлен $\alpha \left({x}\right)$ из $Z_{p}[x]$ принадлежит $[B]$.

Если $deg\alpha \left({x}\right)\le n$, то $\alpha \left({x}\right)\in B\subset [B]$ Пусть $deg\alpha \left({x}\right)>n$. Дальнейшие рассуждения проводим по индукции.

Предположим, что для любого многочленам $\beta \left({x}\right)$ (из $Z_{p}[x]$), степень которого меньше степени многочлена $\alpha \left({x}\right)$ имеет место $\beta \left({x}\right)\in [B]$.

Разделим многочлен $\alpha \left({x}\right)$ на многочлен $g\left({x}\right)$ с остатком, то есть верно равенство
\begin{equation*}
     \alpha \left({x}\right)=g\left({x}\right)\cdot \beta \left({x}\right)+r\left({x}\right)
\end{equation*}
 для некоторых многочленов $\beta \left({x}\right)$ и $r\left({x}\right)$ и $degr\left({x}\right)<degg\left({x}\right)$. Тогда по определению $r\left({x}\right)\in B\subset [B]$. Но из сравнения степеней многочленов следует, что \begin{equation*}
     deg\alpha \left({x}\right)=degg\left({x}\right)+deg\beta \left({x}\right).
 \end{equation*} Откуда \begin{equation*}
     deg\beta \left({x}\right)=deg\alpha \left({x}\right)-degg\left({x}\right)=deg\alpha \left({x}\right)-n<deg\alpha \left({x}\right).
 \end{equation*}По индукционному предположению $\beta \left({x}\right)\in [B]$. Но тогда $g\left({x}\right)\cdot \beta \left({x}\right)=b\cdot \beta \left({x}\right)\in b[B]\subset [B]$. Мы получили, что $g\left({x}\right)\cdot \beta \left({x}\right)\in [B]$ и $r\left({x}\right)\in [B]$. Поэтому\begin{equation*}
     \alpha \left({x}\right)=g\left({x}\right)\cdot \beta \left({x}\right)+r\left({x}\right)\in [B].
 \end{equation*}
Итак, мы доказали по индукции, что $Z_{p}[x]=[B]$. Заметим, что $B$  --- конечное множество. Это означает, что $Z_{p}[x]$ является конечно порожденным модулем над своим подкольцом $Z<b>$. Поскольку $b\in A$, то по предложению 5 существует конечное кольцо $F$ и гомоморфизм $\gamma:Z_{p}[x]\rightarrow F$ такой, что $\gamma \left({a}\right)\not\in \gamma \left({A}\right)$. Вывод: кольцо $Z_{p}[x]$ финитно отделимо. Предложение 6 доказано.
\end{proof}
\begin{corollary}
 Моногенное кольцо простой характеристики финитно отделимо.
\end{corollary}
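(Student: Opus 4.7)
План доказательства следствия состоит в том, чтобы свести финитную отделимость произвольного моногенного кольца простой характеристики $p$ к уже установленной финитной отделимости кольца $Z_{p}[x]$ (предложение 6), воспользовавшись замкнутостью класса финитно отделимых колец относительно подколец и гомоморфных образов (лемма 1).

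Сначала я бы рассмотрел подкольцо $Z^{*}_{p}[x]\subset Z_{p}[x]$, состоящее из всех многочленов без свободного члена (см. пп.~15). По лемме 1 оно наследует свойство финитной отделимости от $Z_{p}[x]$. Далее, пусть $Z\left<{a}\right>$ --- моногенное кольцо характеристики $p$. Поскольку требование единицы не налагается, элементы $Z\left<{a}\right>$ суть линейные комбинации степеней $a, a^{2}, a^{3}, \dots$ с целыми коэффициентами, которые в силу характеристики $p$ можно считать принадлежащими $Z_{p}$. Отображение $x\mapsto a$ задаёт тогда сюръективный гомоморфизм колец $\psi\colon Z^{*}_{p}[x]\rightarrow Z\left<{a}\right>$.

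По второй половине леммы 1 (замкнутость относительно гомоморфных образов) получаем, что $Z\left<{a}\right>$ финитно отделимо как гомоморфный образ финитно отделимого кольца $Z^{*}_{p}[x]$. Этим доказательство завершается.

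Главная техническая тонкость, на которую здесь следует обратить внимание, состоит в корректности перехода к $Z^{*}_{p}[x]$: именно отсутствие свободного члена у элементов $Z\left<{a}\right>$ (кольцо без единицы, порождённое одним элементом $a$) гарантирует, что гомоморфизм $\psi$ определён на $Z^{*}_{p}[x]$, а не требует привлечения всего $Z_{p}[x]$ (что потребовало бы наличия единицы). Остальные шаги являются непосредственными применениями уже доказанных утверждений.
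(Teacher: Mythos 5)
Ваше доказательство корректно и по существу совпадает с доказательством в статье: там тоже $Z\left<{a}\right>$ представляется как гомоморфный образ подкольца $Z^{*}_{p}[x]$ финитно отделимого (по предложению 6) кольца $Z_{p}[x]$, после чего применяется лемма 1. Вы лишь чуть подробнее проговариваете промежуточный шаг о наследовании отделимости подкольцом и о корректности гомоморфизма $x\mapsto a$ в отсутствие единицы.
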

\begin{proof}
Пусть $K=Z\left<{a}\right>$ и $pa=0$ для некоторого простого числа $p$. Тогда $Z\left<{a}\right>$ является гомоморфным образом кольца $Z^{* }_{p}[x]$ - многочленов из $Z_{p}[x]$ без подобных членов. По предложению 6 кольцо $Z_{p}[x]$ финитно отделимо. По лемме 1 заключаем, что $Z\left<{a}\right>$ финитно отделимо.
\end{proof}
\begin{lemma}
Если $a$  --- целокрутящийся элемент кольца $K$ и его целое кручение простое число, то моногенное кольцо $Z\left<{a}\right>$  --- финитно отделимо.
\end{lemma}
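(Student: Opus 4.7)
The plan is to apply Proposition 3 after reducing to a ``free'' monogenic presentation. Since $\tau_a=p$, there is a monic polynomial $f(x)\in Z[x]$ of some degree $n$, without constant term, such that $pf(a)=0$. Let $Z^{*}[x]$ denote the (non-unital) ring of integer polynomials without constant term, and set $K_0=Z^{*}[x]/(pf(x))$, writing $a_0$ for the image of $x$. The assignment $a_0\mapsto a$ extends to a surjective ring homomorphism $K_0\to Z\left<{a}\right>$, so by Lemma 1 it suffices to prove that $K_0$ is finitely separable.

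In $K_0$ I would take the ideal $I_0=(f(a_0))$. Since $f(\overline{a_0})=0$ with $f$ monic of degree $n$, the quotient $K_0/I_0$ is additively generated by $\overline{a_0},\overline{a_0}^{2},\ldots,\overline{a_0}^{\,n-1}$; hence its additive group is finitely generated and $K_0/I_0$ is finitely separable by Lemma 5. Moreover $pI_0=0$: every element of $I_0$ has the form $h(a_0)f(a_0)$ (allowing an integer constant in $h$), and $p\cdot h(a_0)f(a_0)=h(a_0)\cdot pf(a_0)=0$.

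The central technical point is to show that $I_0$ itself is finitely separable. I plan to do this by verifying $I_0\cap pK_0=\{0\}$, so that the canonical projection $K_0\to K_0/pK_0$ restricts to a ring embedding of $I_0$. Suppose $\overline{q(x)f(x)}=\overline{pg(x)}$ in $K_0$ for some $q\in Z[x]$ and $g\in Z^{*}[x]$. Then $q(x)f(x)-pg(x)=ph(x)f(x)$ for some $h\in Z[x]$, whence $q(x)f(x)=p\bigl(g(x)+h(x)f(x)\bigr)$, giving $p\mid q(x)f(x)$ in $Z[x]$. Since $Z[x]$ is a UFD and $f$ is monic (so $\overline{f}\neq 0$ in $Z_p[x]$), it follows that $p\mid q$. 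Writing $q=pq'$ yields $\overline{qf}=p\,\overline{q'f}\in pI_0=\{0\}$.

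Once this is done, $I_0$ is a subring of $K_0/pK_0$, which is monogenic of prime characteristic $p$ and therefore finitely separable by the Corollary to Proposition 6; Lemma 1 then yields that $I_0$ is finitely separable. Proposition 3 applied to $K_0$ with the ideal $I_0$ (both hypotheses now verified) gives that $K_0$ is finitely separable, and Lemma 1 transfers this to $Z\left<{a}\right>$. The main obstacle is the verification $I_0\cap pK_0=\{0\}$: the reduction to the free presentation $K_0=Z^{*}[x]/(pf(x))$ is what makes it tractable, since it replaces the unknown structure of a general $Z\left<{a}\right>$ by a concrete polynomial quotient in which the UFD property of $Z[x]$ together with the monicness of $f$ forces the required divisibility $p\mid q$.
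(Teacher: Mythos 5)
Your proof is correct, and its core coincides with the paper's: both pass to the presentation $K_{0}=Z^{*}[x]/(pf(x))$ and both rest on the same key computation, namely that the ideal generated by $f(a_{0})$ meets $pK_{0}$ trivially; your verification (reduce modulo $p$, use that $Z_{p}[x]$ is a domain and $f$ is monic to get $p\mid q$, then absorb the factor $p$ using $pI_{0}=0$) is essentially the argument the paper gives. Where you genuinely diverge is in how this trivial intersection is exploited. The paper embeds $K_{0}$ into the direct product $K_{0}/I(\overline{f})\times K_{0}/pK_{0}$, observes that the first factor is finitely separable by Lemma 6 (there the generator is a целый алгебраический element) and the second by the corollary to Proposition 6, and concludes by Maltsev's theorem on finite direct products of finitely separable rings. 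You instead use the trivial intersection to embed the ideal $I_{0}=(f(a_{0}))$ itself into $K_{0}/pK_{0}$, conclude that $I_{0}$ is finitely separable as a subring of a finitely separable ring, and finish with Proposition 3 (extension of a finitely separable ideal by a quotient whose additive group is finitely generated); your verification of its hypotheses is sound, since $f$ monic of degree $n$ makes the additive group of $K_{0}/I_{0}$ generated by $\overline{a_{0}},\dots,\overline{a_{0}}^{\,n-1}$. Both finishes are legitimate with the tools established in the paper: the product argument is lighter, avoiding the machinery of Propositions 2 and 3 (which the paper reserves for Proposition 3's own proof and the PI-ring corollary), while your route avoids Maltsev's product theorem at the cost of invoking the heavier extension result.
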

\begin{proof}

 Пусть $a$  --- целокрутящийся элемент кольца $K$ и его целое кручение $\tau _{a}=p$  --- простое число. Если $a$  --- целый алгебраический элемент, то по лемме 6 кольцо $Z\left<{a}\right>$ финитно отделимо.
Пусть $a$ не является целым алгебраическим. На основании определений существует унитарный многочлен $f\left({x}\right)$ с целыми коэффициентами такой, что $p\cdot f\left({a}\right)=0$. Пусть\begin{equation*}
    f\left({x}\right)=x^{n}+k_{1}x^{n-1}+\dots, k_{n-1}x.
\end{equation*}
Далее без ограничения общности будем считать, что кольцо $K$ совпадает с $Z\left<{a}\right>$.

Далее рассмотрим кольцо $Z^{* }[x]$  многочленов с целыми коэффициентами и без свободных членов и его фактор-кольцо по главному идеалу $I\left({pf\left({x}\right)}\right)$, порожденному элементом $pf\left({x}\right)$. Это фактор-кольцо $Z^{* }[x]/I\left({pf\left({x}\right)}\right)$ обозначим через $K^{* }$, а через $\overline{u\left({x}\right)}$  --- образ $u\left({x}\right)$ при каноническом гомоморфизме $Z^{* }[x]\rightarrow Z^{* }[x]/I\left({pf\left({x}\right)}\right)$, где $u\left({x}\right)\in Z^{* }[x]$. Покажем, что кольцо $K^{* }$ финитно отделимо. Рассмотрим следующие идеалы кольца $K^{* }$: главный идеал $I\left({\overline{f\left({x}\right)}}\right)$, порожденный элементом $\overline{f\left({x}\right)}$, и $p\cdot K^{* }$. Покажем, что\begin{equation*}
    I\left({\overline{f\left({x}\right)}}\right)\cap p\cdot K^{* }=\{0\}.
\end{equation*}  Предположим противное. Это означает, что
\begin{center}
    $\overline{f\left({x}\right)}\cdot \overline{g\left({x}\right)}=p\overline{h\left({x}\right)}$ или $\overline{f\left({x}\right)}=p\overline{h\left({x}\right)}$
\end{center} для некоторых многочленов $g\left({x}\right)$ и $h\left({x}\right)$ с целыми коэффициентами и без подобных членов, причем $\overline{f\left({x}\right)}\cdot \overline{g\left({x}\right)}\ne 0$ (*). Отсюда следует, что\begin{equation*}
    f\left({x}\right)g\left({x}\right)-ph\left({x}\right)\in I\left({pf\left({x}\right)}\right)
\end{equation*}  или \begin{equation*}
    f\left({x}\right)-ph\left({x}\right)\in I\left({pf\left({x}\right)}\right).
\end{equation*} Откуда получаем, что все коэффициенты многочлена $f\left({x}\right)g\left({x}\right)$ делятся на $p$ или все коэффициенты многочлена $f\left({x}\right)$ делятся на $p$. Последнее неверно, так как $f\left({x}\right)$  --- унитарный многочлен. Если все коэффициенты многочлена $f\left({x}\right)g\left({x}\right)$ делятся на $p$, то и все коэффициенты многочлена $g\left({x}\right)$ делятся на $p$ (это известное несложное упражнение). Тогда заключаем $f\left({x}\right)g\left({x}\right)\in I\left({pf\left({x}\right)}\right)$. Отсюда следует $\overline{f\left({x}\right)}\cdot \overline{g\left({x}\right)}=0$. Это противоречит (*). Противоречие доказывает, что $I\left({\overline{f\left({x}\right)}}\right)\cap p\cdot K^{* }=\{0\}$.

Откуда следует, что кольцо $K^{* }$ вкладывается в прямое произведение колец $K^{* }/I\left({\overline{f\left({x}\right)}}\right)$ и $K^{* }/p\cdot K^{* }$. В первом кольце имеет место соотношение $f\left({\overline{x}}\right)=0$ и оно является моногенным кольцом, порожденным элементом $\overline{x}$. По лемме 6 такое кольцо финитно отделимо. Кольцо $K^{* }/p\cdot K^{* }$ является моногенным кольцом простой характеристики. Оно является финитно отделимым по следствию предложению 6. Как показал А. И. Мальцев, прямое произведение конечного числа финитно отделимых колец финитно отделимо [1]. Поэтому кольцо $K^{* }$  финитно отделимо, как подкольцо финитно отделимого кольца  $K^{* }/I\left({\overline{f\left({x}\right)}}\right)\times K^{* }/p\cdot K^{* }$ Осталось заметить, что моногенное кольцо $Z\left<{a}\right>$ является гомоморфным образом кольца $K^{* }$ при каноническом гомоморфизме $u\left({x}\right)\rightarrow u\left({a}\right)$ и воспользоваться леммой 1. Лемма 7 доказана.
\end{proof}
\begin{lemma}
Если $a$  --- целокрутящийся элемент кольца $K$ и его целое кручение свободно от квадратов, то моногенное кольцо $Z\left<{a}\right>$  финитно отделимо.
\end{lemma}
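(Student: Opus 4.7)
План состоит в обобщении рассуждений леммы 7 с простого кручения на произвольное свободное от квадратов. Пусть $\tau _{a}=p_{1}p_{2}\cdots p_{s}$  ---  произведение различных простых чисел и $\tau _{a}f\left({a}\right)=0$ для некоторого унитарного многочлена $f\left({x}\right)$ с целыми коэффициентами без свободного члена. Следуя схеме леммы 7, считаю $K=Z\left<{a}\right>$ и рассматриваю кольцо $K^{*}=Z^{*}[x]/I\left({\tau _{a}f\left({x}\right)}\right)$. Моногенное кольцо $Z\left<{a}\right>$ является его гомоморфным образом при отображении $u\left({x}\right)\mapsto u\left({a}\right)$, так что по лемме 1 достаточно установить финитную отделимость $K^{*}$.

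Центральный технический шаг  ---  равенство $I\left({\overline{f\left({x}\right)}}\right)\cap \tau _{a}K^{*}=\{0\}$ в $K^{*}$, и именно это место я считаю наиболее деликатным. Планирую перенести рассуждение из леммы 7 почти дословно: из равенства $\overline{f\left({x}\right)}\cdot \overline{h\left({x}\right)}=\tau _{a}\overline{v\left({x}\right)}$ в $K^{*}$ получается $f\left({x}\right)\bigl(h\left({x}\right)-\tau _{a}w\left({x}\right)\bigr)=\tau _{a}v\left({x}\right)$ в $Z^{*}[x]$ для некоторого $w\left({x}\right)$. Содержание унитарного $f\left({x}\right)$ равно 1, и по лемме Гаусса все коэффициенты $h\left({x}\right)-\tau _{a}w\left({x}\right)$ делятся на $\tau _{a}$; поэтому и все коэффициенты $h\left({x}\right)$ делятся на $\tau _{a}$, откуда $\overline{f\left({x}\right)}\cdot \overline{h\left({x}\right)}=0$ в силу $\tau _{a}\overline{f\left({x}\right)}=0$.

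Из установленного равенства получаю вложение $K^{*}\hookrightarrow K^{*}/I\left({\overline{f\left({x}\right)}}\right)\times K^{*}/\tau _{a}K^{*}$. В первом сомножителе $\overline{x}$  ---  целый алгебраический элемент (так как $f\left({\overline{x}}\right)=0$ и $f$ унитарный), так что соответствующее моногенное кольцо финитно отделимо по лемме 6. Ко второму сомножителю применяю попарную взаимную простоту чисел $p_{i}$: индукцией из стандартного соотношения $p_{i}K^{*}\cap p_{j}K^{*}=p_{i}p_{j}K^{*}$ (при $\gcd\left({p_{i}, p_{j}}\right)=1$) получается $\bigcap_{i=1}^{s}p_{i}K^{*}=\tau _{a}K^{*}$, и тождество Безу $1=\sum _{i}c_{i}\left({\tau _{a}/p_{i}}\right)$ даёт изоморфизм $K^{*}/\tau _{a}K^{*}\cong \prod_{i=1}^{s}K^{*}/p_{i}K^{*}$. Каждое кольцо $K^{*}/p_{i}K^{*}$ моногенно и имеет простую характеристику $p_{i}$, поэтому финитно отделимо по следствию из предложения 6.

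На заключительном шаге сошлюсь на теорему А. И. Мальцева (замечание 6): конечное прямое произведение финитно отделимых колец финитно отделимо. Отсюда по лемме 1 финитно отделимо само $K^{*}$ (как подкольцо финитно отделимого произведения), а вместе с ним и его гомоморфный образ $Z\left<{a}\right>$.
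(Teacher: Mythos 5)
Ваше доказательство корректно, но идёт по заметно другому пути, чем авторское. В статье лемма 8 выводится из леммы 7 редукцией внутри самого кольца $K$: для $\tau_{a}=p_{1}\cdots p_{n}$ рассматриваются идеалы кручения $I_{k_{i}}$ с $k_{i}=\tau_{a}/p_{i}$, из соотношения Безу $\sum_{i}z_{i}k_{i}=1$ следует $\bigcap_{i}I_{k_{i}}=0$, кольцо $K$ вкладывается в $\prod_{i}K/I_{k_{i}}$, и в каждом факторе образ $a$ имеет простое целое кручение $p_{i}$, так что применима лемма 7; завершает дело теорема Мальцева. Вы вместо этого обобщаете внутреннюю конструкцию леммы 7 с простого модуля на $\tau_{a}$: ключевое равенство $I\left({\overline{f\left({x}\right)}}\right)\cap\tau_{a}K^{*}=\{0\}$ обосновано у вас верно --- там, где для простого $p$ достаточно было заметить, что из делимости всех коэффициентов $f\left({x}\right)g\left({x}\right)$ на $p$ при унитарном $f$ следует делимость на $p$ всех коэффициентов $g\left({x}\right)$, для свободного от квадратов $\tau_{a}$ работает та же ссылка на мультипликативность содержания (лемма Гаусса), --- а разложение на простые множители применяется лишь к фактору $K^{*}/\tau_{a}K^{*}$, распадающемуся в произведение моногенных колец простых характеристик $p_{i}$ (следствие предложения 6). Выигрыш авторского пути --- краткость при уже доказанной лемме 7 и отсутствие возврата к кольцу многочленов; выигрыш вашего --- единообразие (лемма 7 становится частным случаем одной конструкции) и ясность того, что попарная взаимная простота множителей нужна только для второго сомножителя. Оба варианта в итоге опираются на леммы 1 и 6 и на теорему Мальцева о конечных прямых произведениях финитно отделимых колец.
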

\begin{proof}
 Пусть $a$  --- целокрутящийся элемент кольца $K$ и его целое кручение $\tau _{a}$ свободно от квадратов. Если $\tau _{a}=1$, то $Z\left<{a}\right>$  финитно отделимо по лемме 5. Пусть $\tau _{a}\ne 1$, тогда $\tau _{a}$ раскладывается в произведение различных простых чисел, то есть $\tau _{a}=p_{1}\cdots p_{n}$. Рассмотрим семейство натуральных чисел $k_{i}=\frac{\tau _{a}}{p_{i}}$ ($i=1, \dots n$) Из определений вытекает, что Н.О.Д $\left({k_{1}, \dots k_{n}}\right)=1$. По теореме Евклида о линейном выражении Н.О.Д существует семейство целых чисел $z_{i}$ ($i=1, \dots, n$) такое, что
\begin{equation}
    z_{1}k_{1}+\dots +z_{n}k_{n}=1 \tag{**}.
\end{equation}
Рассмотрим семейство идеалов $I_{k_{i}}$ кольца $K$ (см. пп. 11). Из равенства ($* * $) следует, что $\bigcap\limits_{i=1}^{n}{I_{k_{i}}}=0$. Действительно, если $u\in \bigcap\limits_{i=1}^{n}{I_{k_{i}}}$, то\\ $k_{i}u=0, где i=1, \dots, n$, откуда\begin{equation*}
    u=1\cdot u=\left({z_{1}k_{1}+\dots +z_{n}k_{n}}\right)\cdot u=0.
\end{equation*}
Отсюда следует, что кольцо $K$ вкладывается в прямое произведение колец $K/I_{k_{1}}\times\dots\times K/I_{k_{n}}$. Далее, без ограничения общности, будем считать, что кольцо $K$ совпадает с моногенным кольцом $Z\left<{a}\right>$. Покажем, что каждое кольцо $K/I_{k_{i}}$  финитно отделимо. Действительно, $\tau _{a}f\left({a}\right)=0 $ для некоторого унитарного многочлена $f\left({x}\right)$ с целыми коэффициентами. Тогда в кольце $K/I_{k_{i}}$ имеет место $p_{i}f\left({\overline{a}}\right)=0$, где $\overline{a}$  --- образ $a$ при каноническом гомоморфизме $K\rightarrow K/I_{k_{i}}$.

Это означает, что $\overline{a}$  --- целокрутящийся элемент кольца $K/I_{k_{i}}$ и его целое кручение простое число $p_{i}$. По лемме 7 моногенное кольцо $Z\left<{\overline{a}}\right>$   финитно отделимо, то есть $ K/I_{k_{i}}=Z\left<{\overline{a}}\right>$ финитно отделимо, Как показал А. И. Мальцев, прямое произведение конечного числа финитно отделимых колец финитно отделимо. Поэтому кольцо $K$  финитно отделимо, как подкольцо финитно отделимого кольца $K/I_{k_{1}}\times\dots\times  K/I_{k_{n}}$. Лемма 8 доказана.
\end{proof}
\begin{lemma}
Пусть для некоторого элемента $a$ кольца $K$ выполняется равенство $l_{0}a^{m}+l_{1}a^{m-1}+\dots +l_{m-1}a=0$ для некоторого натурального числа $m$ и некоторых целых чисел $l_{0}, l_{1}, \dots, l_{m-1}$, причем $l_{0}>0$ и Н.О.Д $\left({l_{0}, l_{1}, \dots, l_{m-1}}\right)=1, $ а также равенство $df\left({a}\right)=0$ для некоторого унитарного многочлена с целыми коэффициентами $f\left({x}\right)$ (без восвободного члена), и некоторого натурального числа $d>1$ Тогда в кольце $K$ выполнено равенство $\varphi \left({a}\right)=0$ для некоторого унитарного многочлена $\varphi \left({x}\right)$ степени $\le $ $m$ с целыми коэффициентами (без свободного члена).
\end{lemma}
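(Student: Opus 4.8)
The plan is to read off $\varphi$ from the minimal polynomial of $a$, reducing the whole problem to a purely polynomial fact over the residue rings $\mathbb{Z}/c\mathbb{Z}$. First, since $l_0a^m+\dots+l_{m-1}a=0$ the element $a$ is algebraic, and since $d\,f(a)=0$ with $f$ monic it is целокрутящийся; hence by Предложение 1 its minimal polynomial has the form $F(x)=c\,F^{*}(x)$ with $F^{*}$ monic and without constant term, of some degree $e=D|a|\le m$. Applying the first part of Предложение 1 to the content-$1$ polynomial $g(x)=l_0x^m+\dots+l_{m-1}x$, I get $k\,g=F\,H=c\,F^{*}H$ for the least $k>0$, with the content of $H$ coprime to $k$; comparing contents (content is multiplicative) yields $k=c\cdot\operatorname{cont}(H)$, whence $\operatorname{cont}(H)=1$ and $k=c$. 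Cancelling $c$ gives a factorization $g=F^{*}h$ in $\mathbb{Z}[x]$ with $h$ of degree $m-e$ and content $1$.

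The key observation is now that $c\,F^{*}(a)=0$ (this is just $F(a)=0$) while $g(a)=F^{*}(a)h(a)=0$. Consequently, for every polynomial $u(x)=c\,p(x)+h(x)q(x)$ in the ideal $(c,h)\subseteq\mathbb{Z}[x]$ one has
\[
(F^{*}u)(a)=c\,F^{*}(a)\,p(a)+F^{*}(a)h(a)\,q(a)=0 .
\]
Because $F^{*}$ is monic, $F^{*}u$ is monic of degree $\le e+\deg u$ whenever $u$ is monic, and it has no constant term since $F^{*}$ has none. Thus it suffices to produce a monic $u\in(c,h)$ of degree $\le m-e$, and then set $\varphi:=F^{*}u$, a monic polynomial (без свободного члена) of degree $\le m$ with $\varphi(a)=0$.

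Everything therefore reduces to the following elementary claim, which I expect to be the main obstacle: if $h\in\mathbb{Z}[x]$ has content $1$, then for every $c\ge1$ the ideal $(c,h)$ contains a monic polynomial of degree $\le\deg h$. I would prove this by passing to $(\mathbb{Z}/c\mathbb{Z})[x]$, where it becomes the assertion that the principal ideal $(\bar h)$ contains a monic polynomial of degree $\le\deg h$, and then splitting $c$ into prime powers by the Chinese remainder theorem. Over $\mathbb{Z}/p^{N}\mathbb{Z}$ the hypothesis $\operatorname{cont}(h)=1$ forces $\bar h\not\equiv0\pmod p$, so $\bar h\bmod p$ has a well-defined degree $t_0\le\deg h$ with unit leading coefficient; a Hensel/Weierstrass-type induction on $N$ — at each step using the division algorithm in $\mathbb{F}_p[x]$ to absorb the $p$-divisible error term, whose degree can be kept below $t_0$ — produces a monic element of $(\bar h)$ of degree exactly $t_0$. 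Lifting this monic polynomial back to $\mathbb{Z}$ and recombining the prime-power pieces yields the required $u$, hence $\varphi=F^{*}u$. Finally the degenerate case $c=1$ is immediate, since then $F^{*}=F$ is already a monic annihilator of degree $e\le m$.
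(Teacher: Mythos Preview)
Your argument is correct and complete in outline; the reduction to finding a monic element of the ideal $(c,h)\subseteq\mathbb{Z}[x]$ of degree at most $\deg h$ is sound, and the Hensel/Weierstrass lifting over $\mathbb{Z}/p^{N}\mathbb{Z}$ (together with CRT to reassemble the prime-power pieces at a common degree) does produce such an element. The content computation giving $k=c$ and $\operatorname{cont}(H)=1$ is exactly right, and once $u$ is in hand the polynomial $\varphi=F^{*}u$ is monic, has no constant term, has degree at most $m$, and annihilates $a$.

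This is, however, a genuinely different route from the paper's. The paper never factors $g$ through the minimal polynomial and never invokes a preparation-type lemma over $\mathbb{Z}/p^{N}\mathbb{Z}$. Instead it argues by induction on $d$: for $d=p$ prime it selects, among all annihilating polynomials of degree $\le m$ whose coefficients are not all divisible by $p$, one of least degree $\widetilde{m}$, shows by subtracting a suitable multiple of $pf(x)x^{\widetilde{m}-n}$ that its leading coefficient must be prime to $p$, and then uses a B\'ezout identity $t\widetilde{l}_{0}+sp=1$ to manufacture a monic combination $t\widetilde{g}(x)+s\,pf(x)x^{\widetilde{m}-n}$. The composite case $d=pq$ is handled by passing to $K/I_{q}$, applying the prime case there, and descending. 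Your approach trades this ring-theoretic induction for a single structural statement about ideals in $\mathbb{Z}[x]$; it is cleaner conceptually and makes the polynomial content of the lemma explicit, at the cost of importing the Weierstrass-type lifting. The paper's approach is more elementary and entirely self-contained, needing nothing beyond B\'ezout and passage to torsion quotients.
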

\begin{proof}
 1) Без ограничения общности, по предложению 1 будем считать, что $df\left({x}\right)$  --- минимальный многочлен элемента $a$ в кольце $K$. Пусть $d=p$  --- простое число, то есть $pf\left({a}\right)=0$ Рассмотрим многочлен \begin{equation*}
     g\left({x}\right)=l_{0}x^{m}+l_{1}x^{m-1}+\dots + l_{m-1}x.
 \end{equation*} Заметим, что не все коэффициенты многочлена $g\left({x}\right)$ делятся на $p$. Далее считаем, что среди многочленов $\widetilde{g}\left({x}\right)$ степени $\le m$, без свободного члена, у которых не все коэффициенты делятся на $p$, и для которых $\widetilde{g}\left({a}\right)=0$, выбран многочлен наименьшей степени. Пусть $\widetilde{m}$  --- степень многочлена $\widetilde{g}\left({x}\right)$,  $\widetilde{\widetilde{l}_{0}}$  --- старший коэффициент многочлена $\widetilde{g}\left({x}\right)$, а $n$  --- степень многочлена $f\left({x}\right)$. По определению минимального многочлена $n\le \widetilde{m}$ Предположим, что $\widetilde{\widetilde{l}_{0}}$ делится на $p$, то есть $\widetilde{\widetilde{l}_{0}}=tp$ для некоторого целого числа $t$. Пусть \begin{equation*}
     h\left({x}\right)=\widetilde{g}\left({x}\right)-tpf\left({x}\right)x^{\widetilde{m}-n}.
 \end{equation*}
Заметим, что $h\left({a}\right)=0$ и не все коэффициенты $h\left({x}\right)$ делятся на $p$. В противном случае все коэффициенты многочлена $\widetilde{g}\left({x}\right)$ делятся на $p$. Но степень многочлена $h\left({x}\right)$ меньше $\widetilde{m}$. Это противоречит выбору $\widetilde{m}$.\\
Вывод: $\widetilde{\widetilde{l}_{0}}$ не делится на $p$. Тогда Н.О.Д $\left({\widetilde{\widetilde{l}_{0}}, p}\right)=1$. По теореме Евклида $t\widetilde{\widetilde{l}_{0}}+sp=1$ для некоторых целых $t, s$. Пусть\begin{equation*}
    \varphi \left({x}\right)=t\cdot g\left({x}\right)+s\cdot pf\left({x}\right)x^{\widetilde{m}-n}.
\end{equation*} Тогда $\varphi \left({x}\right)$ унитарный многочлен степени $\widetilde{m}$ с целыми коэффициентами. и $\varphi \left({a}\right)=0$. Вывод: утверждение леммы 9 верно для этого случая.

2) Пусть для всех чисел $d'<d$ утверждение леммы 9 верно при условии $d'f'\left({a}\right)=0$ для некоторого унитарного многочлена с целыми коэффициентами $f'\left({x}\right)$. То есть $\varphi '\left({a}\right)=0$ для некоторого унитарного многочлена $\varphi '\left({x}\right)$ степени $\le $ $m$ с целыми коэффициентами.

3) Пусть $d$  --- не простое число. Тогда $d=p\cdot q$, где для некоторого простого $p$ и целого числа $q>1$. Пусть $I_{q}=\{b\in Z\left<{a}\right>| qb=0\}$  --- идеал кручения (см. пп. 11). Тогда в фактор--кольце $Z\left<{a}\right>/I_{q}$ имеет место \begin{center}
    $pf\left({\overline{a}}\right)=0$ и $g\left({\overline{a}}\right)=0$ в кольце $Z\left<{a}\right>/I_{q}$.
\end{center}  Это означает, что по первому случаю $\varphi \left({\overline{a}}\right)=0$ для некоторого унитарного многочлена $\varphi \left({x}\right)$ степени меньшей или равной $m$ с целыми коэффициентами. Откуда $q\varphi \left({a}\right)=0$ в кольце $Z\left<{a}\right>$. Тогда по предположению пункта 2, поскольку $q<d$, $\varphi '\left({a}\right)=0$ для некоторого унитарного многочлена $\varphi '\left({x}\right)$ степени $\le $ $m$ с целыми коэффициентами.
Лемма 9 доказана.
\end{proof}
\begin{lemma}
Если для некоторого элемента $a$ кольца $K$ выполняется равенство: $z_{0}a^{m}+z_{1}a^{m-1}+\dots+z_{m-1}a=0$ для некоторого натурального числа $m$ и некоторых целых чисел $z_{0}, z_{1}, \dots, z_{m-1}$, причем $z_{0}>0$ и \\Н.О.Д $\left({z_{0}, z_{1}, \dots, z_{m-1}}\right)=k, $ а также равенство $df\left({a}\right)=0$ для некоторого унитарного многочлена с целыми коэффициентами $f\left({x}\right)$ (без свободного члена), и некоторого натурального числа $d>1$, тогда в кольце $K$ выполнено равенство $k\varphi \left({a}\right)=0$ для некоторого унитарного многочлена $\varphi \left({x}\right)$ тепени $\le $ $m$ с целыми коэффициентами (без свободного члена).
\end{lemma}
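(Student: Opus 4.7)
План состоит в том, чтобы свести лемму 10 к уже доказанной лемме 9 путём перехода к фактор-кольцу по идеалу кручения $I_k=\{u\in K\mid ku=0\}$ (см. пп. 13). Первым шагом я бы вынес Н.О.Д из коэффициентов: поскольку Н.О.Д $\left({z_0,z_1,\dots,z_{m-1}}\right)=k$, можно записать $z_i=kl_i$ для некоторых целых $l_i$, причём $l_0>0$ и Н.О.Д $\left({l_0,l_1,\dots,l_{m-1}}\right)=1$. Исходное соотношение принимает вид $k\left({l_0a^m+l_1a^{m-1}+\dots+l_{m-1}a}\right)=0$, то есть элемент $l_0a^m+l_1a^{m-1}+\dots+l_{m-1}a$ лежит в идеале $I_k$.

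Вторым шагом я бы рассмотрел канонический гомоморфизм $K\rightarrow K/I_k$ и обозначил через $\overline{a}$ образ элемента $a$. В фактор-кольце $K/I_k$ выполняются два соотношения: $l_0\overline{a}^m+l_1\overline{a}^{m-1}+\dots+l_{m-1}\overline{a}=0$ и $df\left({\overline{a}}\right)=0$ (последнее вытекает из того, что канонический гомоморфизм является гомоморфизмом колец, а по условию $df\left({a}\right)=0$ в $K$). Все условия леммы 9 для элемента $\overline{a}$ кольца $K/I_k$ выполнены: Н.О.Д коэффициентов равен $1$, старший коэффициент $l_0$ положителен, $f\left({x}\right)$ --- унитарный многочлен с целыми коэффициентами без свободного члена, и $d>1$. Применяя лемму 9, я получу унитарный многочлен $\varphi \left({x}\right)$ степени $\le m$ с целыми коэффициентами и без свободного члена, для которого $\varphi \left({\overline{a}}\right)=0$ в $K/I_k$.

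Последним шагом, возвращаясь в кольцо $K$, равенство $\varphi \left({\overline{a}}\right)=0$ означает $\varphi \left({a}\right)\in I_k$, то есть $k\varphi \left({a}\right)=0$, что и даёт требуемое утверждение. Существенных препятствий здесь не ожидается: вся основная работа уже проделана в лемме 9, а лемма 10 оказывается её непосредственным следствием после перехода к фактор-кольцу по идеалу кручения $I_k$. Единственное, что стоит аккуратно проследить, --- это что гипотеза $df\left({a}\right)=0$ корректно переносится на $K/I_k$ (что тривиально), и что из $\varphi \left({\overline{a}}\right)=0$ однозначно вытекает $k\varphi \left({a}\right)=0$ по определению $I_k$.
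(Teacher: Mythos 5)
Ваше рассуждение совпадает с доказательством в статье: там тоже совершается переход к фактор-кольцу $K/I_{k}$ по идеалу кручения, применяется лемма 9 к образу $\overline{a}$, после чего из $\varphi\left({\overline{a}}\right)=0$ выводится $k\varphi\left({a}\right)=0$ в $K$. Ваш вариант лишь чуть подробнее (явное вынесение $k$ из коэффициентов и проверка условий леммы 9), но по существу это то же самое доказательство.
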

\begin{proof}
Пусть $I_{k}=\{b\in K\mid kb=0\}$  --- идеал кручения (см. пп. 13) В фактор--кольце $K/I_{k}$ выполнено условие леммы 9 для элемента $\overline{a}$  --- образа $a$ при каноническом гомоморфизме $K\rightarrow K/I_{k}$. Применяя лемму 9 получим, что в кольце $K/I_{k}$ равенство $\varphi \left({\overline{a}}\right)=0$ для некоторого унитарного многочлена $\varphi \left({x}\right)$ степени $\le m$ с целыми коэффициентами (без восвободного члена). Тогда в кольце $K$ выполнено $k\varphi \left({a}\right)=0$. Лемма 10 доказана.
\end{proof}

\section{Основные результаты}
\setcounter{theorem}{0}
Основным результатом данной работы является следующая теорема.
\begin{theorem}
1. Для того, чтобы моногенное кольцо $Z\left<{\left. {a}\right>}\right. $ было финитно отделимым необходимо и достаточно, чтобы для некоторых целых чисел $k, n, k_{1}, k_{2},\dots, k_{n-1}$ ($n>0$) выполнялось равенство\begin{equation*}
    k\left({a^{n}+k_{1}a^{n-1}+k_{2}a^{n-2}+k_{n-1}a}\right)=0,
\end{equation*} причем $k$ ---  положительное целое число свободное от квадратов (то есть $k$  --- произведение различных простых чисел или $k=1$ ). \\
2. Пусть моногенное кольцо $Z\left<{\left. {a}\right>}\right. $ задано конечным набором определяющих соотношений $f_{1}\left({a}\right)=0, f_{2}\left({a}\right)=0, \dots, f_{m}\left({a}\right)=0$, где $f_{i}$  --- многочлены с целыми коэффициентами без свободных членов. Для того, чтобы моногенное кольцо $Z\left<{\left. {a}\right>}\right. $ было финитно отделимым необходимо и достаточно, чтобы выполнялось два условия:\\
(i) Н.О.Д коэффициентов всех многочленов $f_{i}$ (вместе взятых) был свободен от квадратов;\\
(ii) унитарный многочлен, являющийся Н.О.Д'ом всех многочленов $f_{i}$ над полем рациональных чисел, имел целые коэффициенты.

\end{theorem}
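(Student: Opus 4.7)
The plan is to derive both parts of the theorem from the auxiliary results already developed, with Part 1 serving as the main content and Part 2 as a translation into the language of defining relations.

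For Part 1, the \emph{necessity} direction is a direct synthesis of \textbf{Lemmas 2--4}. If $Z\langle a\rangle$ is finitely separable, then by \textbf{Lemma 2}(1) the element $a$ is algebraic; by \textbf{Lemma 3} the torsion exponent $E|a|$ coincides with the algebraic degree $D|a|$, so $a$ is integer-torsion; and by \textbf{Lemma 4} the integer torsion $\tau_a$ is square-free. Combined, these yield a monic polynomial $q(x)=x^n+k_1x^{n-1}+\dots+k_{n-1}x$ with $\tau_a\cdot q(a)=0$, exactly the stated relation with $k=\tau_a$. The \emph{sufficiency} direction is immediate from \textbf{Lemma 8}: a relation $k\cdot q(a)=0$ with $k$ square-free and $q$ monic says precisely that $a$ is integer-torsion with $\tau_a\mid k$ square-free, and Lemma 8 then yields finite separability.

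For Part 2 the plan is to show that conditions (i) and (ii) are equivalent to the criterion of Part 1. Let $I\subset Z^*[x]$ be the ideal generated by $f_1,\dots,f_m$, so $Z\langle a\rangle\cong Z^*[x]/I$; let $d$ be the GCD of all coefficients of all $f_i$ and let $g\in\mathbb{Q}[x]$ be the monic GCD of $f_1,\dots,f_m$ over $\mathbb{Q}$. In the \emph{forward} direction, from $kp(x)\in I$ with $k$ square-free and $p$ monic, one observes that every element of $I$ has all its coefficients divisible by $d$; applied to $kp$, whose content is $k$ (since monic $p$ has content $1$), this forces $d\mid k$, giving (i). For (ii), the monic GCD $g$ divides $kp$ in $\mathbb{Q}[x]$ and $k$ is a unit there, so $g\mid p$ in $\mathbb{Q}[x]$; writing $p=gh$ with $g,h\in\mathbb{Q}[x]$ both monic, the classical Gauss-type statement that a monic integer polynomial factoring in $\mathbb{Q}[x]$ as a product of two monic rational polynomials already factors in $Z[x]$ forces $g\in Z[x]$.

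In the \emph{reverse} direction, assume (i) and (ii). By (ii), $g$ is monic with integer coefficients, and polynomial long division in $Z[x]$ by the monic $g$ gives $f_i=g\,q_i$ with $q_i\in Z[x]$. The family $\{q_i\}$ is coprime in $\mathbb{Q}[x]$ (else $g$ would not be the GCD), so there exists a positive integer $N$ in the ideal $(q_1,\dots,q_m)\subset Z[x]$, whence $Ng\in I$ and $N\cdot g(a)=0$. If $N=1$ then $g(a)=0$ with $g$ monic, $a$ is whole-algebraic and \textbf{Lemma 6} concludes. Otherwise, by summing $x^{M_i}f_i$ with offsets $M_i$ chosen so that no two monomials overlap, one produces a single element $h\in I$ with positive leading coefficient and content exactly $d$. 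Applying \textbf{Lemma 10} to the relation $h(a)=0$ (coefficient GCD $=d$) together with the monic-torsion relation $N\cdot g(a)=0$ (with $N>1$) yields $d\cdot\varphi(a)=0$ for some monic $\varphi$; since $d$ is square-free by (i), this is precisely the Part 1 criterion, which was already shown to imply finite separability.

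The main obstacle is the content-theoretic bridge in Part 2: on the necessity side, extracting $g\in Z[x]$ from the monic rational factorization $p=gh$ requires the Gauss-style lemma on monic factorizations; on the sufficiency side, producing an element of $I$ whose content realizes the global GCD $d$ (rather than a proper multiple) is essential so that \textbf{Lemma 10} delivers the coefficient $k=d$ rather than a larger value. Both steps are standard content manipulations, but they must be spelled out carefully because the defining data is stated at the level of the individual polynomials $f_i$ and has to be transferred to arbitrary elements of the ideal they generate.
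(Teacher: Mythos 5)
Your proposal is correct and follows essentially the same architecture as the paper: Part 1 is the conjunction of Lemma 4 (necessity) and Lemma 8 (sufficiency); for Part 2 you pass to the ideal $V$ generated by the $f_i$, derive (i) from the fact that every element of $V$ inherits divisibility of its coefficients by the global content $d$, and prove sufficiency by forming the same non-overlapping shifted sum $\sum x^{M_i}f_i$ of content exactly $d$ and feeding it, together with $N\cdot\gamma(a)=0$, into Lemma 10 and then Lemma 8. The one genuinely different step is your proof of the necessity of (ii): the paper invokes Proposition 1 to identify the minimal polynomial of $a$ as $d\cdot f(x)$ with $f$ monic, compares degrees to conclude $\deg\gamma=n$, and deduces $f=\gamma$; you instead note that $\gamma$ divides the monic integer polynomial $\varphi$ in $\mathbb{Q}[x]$ and apply the Gauss-type lemma that a monic factorization of a monic integer polynomial over $\mathbb{Q}$ already lives in $Z[x]$. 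Your route is shorter and bypasses Proposition 1 entirely; the paper's route has the side benefit of identifying $\gamma$ with the monic part of the minimal polynomial. You are also slightly more careful than the paper on two small points: you separate out the case $N=1$ (where Lemma 10 is formally inapplicable since it requires $d>1$, and Lemma 6 finishes directly), and you note that the leading coefficient of the shifted sum must be arranged positive to meet the hypotheses of Lemma 10.
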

\begin{proof}
Утверждение 1 настоящей теоремы есть совокупность лемм 4 и 8. Докажем утверждение 2.

Пусть моногенное кольцо $Z\left<{\left. {a}\right>}\right. $ задано конечным набором определяющих соотношений $f_{1}\left({a}\right)=0, f_{2}\left({a}\right)=0, \dots, f_{m}\left({a}\right)=0$, где $f_{i}$  --- многочлены с целыми коэффициентами и без подобных членов. Пусть $V=\left<{\left. {f_{1}\left({x}\right), \dots f_{m}\left({x}\right)}\right>}\right. $  ---  идеал в кольце многочленов $Z[x]$, порожденный множеством $f_{1}\left({x}\right), \dots, f_{m}\left({x}\right)$. \\
а) Предположим, что $Z\left<{\left. {a}\right>}\right. $  --- финитно отделимое кольцо. Тогда по лемме 4 в кольце $Z\left<{\left. {a}\right>}\right. $ имеет место равенство $k\cdot \varphi \left({a}\right)=0$ для некоторого унитарного многочлена $\varphi \left({x}\right)$ с целыми коэффициентами (без свободного члена) и некоторого натурального числа $k$ свободного от квадратов.

Покажем, что выполняются условия (i) и (ii) настоящей теоремы. Если допустить, что (i) не выполнено, то это бы означало, что все коэффициенты всех многочленов $f_{i}\left({x}\right)$ ($i=1, \dots, m $) делятся на квадрат $p^{2}$ некоторого простого числа $p$. Отсюда бы вытекало, что это свойство выполнялось бы для всех многочленов идеала $V=\left<{\left. {f_{1}\left({x}\right), \dots f_{m}\left({x}\right)}\right>}\right. $. Этот идеал состоит в точности из всех многочленов $g\left({x}\right)$ с целыми коэффициентами без свободных членов, для которых в кольце $Z\left<{\left. {a}\right>}\right. $ выполнено равенство $g\left({a}\right)=0$. Поэтому $k\cdot\varphi\left({x}\right) \in V$. Отсюда следует, что старший коэффициент многочлена $k\cdot \varphi \left({x}\right)$ делится на $p^{2}$, то есть число $k$ делится на $p^{2}$. Противоречие. Вывод: условие (i) выполнено.

Пусть унитарный многочлен $\gamma \left({x}\right)$  ---  Н.О.Д $f_{1}\left({x}\right), \dots f_{m}\left({x}\right)$ над полем рациональных чисел. Пусть \begin{equation*}
    \gamma \left({x}\right)=x^{n}+\frac{p_{1}}{q_{1}}x^{n-1}+\dots \frac{p_{n}}{q_{n}}.
\end{equation*} Мы используем тот факт, что кольцо многочленов с одной переменной над полем является кольцом главных идеалов. То есть у каждого набора элементов есть Н.О.Д. Из определений вытекает, что \begin{equation*}
    g_{1}\left({x}\right){f_{1}\left({x}\right)+\dots+ g_{m}\left({x}\right) f_{m}\left({x}\right)}=\gamma \left({x}\right)
\end{equation*} для некоторых многочленов $g_{1}\left({x}\right), \dots g_{m}\left({x}\right)$ с рациональными коэффициентами. Пусть $l$  ---  Н.О.К всех знаменателей коэффициентов многочленов $g_{1}\left({x}\right), \dots g_{m}\left({x}\right)$. Тогда $l\cdot g_{1}\left({x}\right)\dots l\cdot g_{m}\left({x}\right)$  --- многочлены с целыми коэффициентами и \begin{equation*}
    \left({l\cdot g_{1}\left({x}\right)}\right) {f_{1}\left({x}\right)+\dots +\left({ l\cdot g_{m}}\left({x}\right)\right) f_{m}\left({x}\right)}=l\cdot \gamma \left({x}\right).
\end{equation*} Откуда заключаем, что $l\cdot \gamma \left({x}\right)\in V$. По предложению 1 минимальный многочлен элемента $a$ в кольце $K$ имеет вид $d\cdot f\left({x}\right)$, то есть в $K$ выполнено $d\cdot f\left({a}\right)=0$, где $f\left({x}\right)$  --- некоторый унитарный многочлен с целыми коэффициентами и без свободного члена, степень которого $n$ равна алгебраической степени элемента $a$, поскольку $d\cdot f\left({x}\right)\in V$. Поскольку $\left({l\cdot \gamma }\right)\left({a}\right)=0$, то степень многочлена $l\gamma \left({x}\right)$ $\ge $ $n$. Поскольку в кольце многочленов на полем рациональных чисел многочлен $d\cdot f\left({x}\right)$ (как и любой элемент из $V$) делится на $\gamma \left({x}\right)$, заключаем, что степень многочлена $\gamma \left({x}\right)$ $\le $ $n$. Вывод: степень многочлена $\gamma \left({x}\right)$ равна $n$. Полученное означает, что $d\cdot f\left({x}\right)=\alpha \cdot \gamma \left({x}\right)$ для некоторого рационального числа $\alpha $. Если равны многочлены, то равны их старшие коэффициенты, откуда $d=\alpha $. Поэтому $f\left({x}\right)=\gamma \left({x}\right)$, то есть $\gamma \left({x}\right)$  --- многочлен с целыми коэффициентами. Вывод: условие (ii) выполнено. \\
б) Предположим, что выполнено условие (i) и (ii).

Пусть $\gamma \left({x}\right)$  --- Н.О.Д $f_{1}\left({x}\right), \dots f_{m}\left({x}\right)$ над полем рациональных чисел Из условия (ii) следует, что $\gamma \left({x}\right)$  --- многочлен с целыми коэффициентами, Как уже отмечалось выше, $l\cdot \gamma \left({x}\right)\in V$ для некоторого натурального числа $l$. Это означает, что $\left({l\cdot \gamma }\right)\left({a}\right)=0$. Пусть $n_{i}$  --- степень многочлена $f_{i}\left({x}\right)$.

Пусть \begin{equation*}
    g\left({x}\right)=f_{1}\left({x}\right)+f_{2}\left({x}\right)x^{n_{1}}+f_{3}\left({x}\right)x^{n_{1}+n_{2}}+\dots f_{m}\left({x}\right)x^{n_{1}+n_{2}+\dots n_{m-1}}
\end{equation*}
Тогда $k$  --- Н.О.Д коэффициентов всех многочленов $f_{i}$ (вместе взятых) совпадает с Н.О.Д коэффициентов многочлена $g\left({x}\right)$. По условию (i) $k$ свободен от квадратов; Из определений следует, что $g\left({x}\right)\in V$, то есть $g\left({a}\right)=0$. По лемме 10 в кольце $K$ выполнено равенство $k\varphi \left({a}\right)=0$ для некоторого унитарного многочлена $\varphi \left({x}\right)$ с целыми коэффициентами (без свободных членов). По лемме 8 кольцо $Z\left<{a}\right>$ финитно отделимо.

Теорема доказана полностью.
\end{proof}
\begin{corollary}
 Конечно порожденное PI-кольцо без кручения является финитно отделимым в том и только в том случае, когда его аддитивная группа конечно порождена.
\end{corollary}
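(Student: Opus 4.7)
План доказательства. Достаточность немедленно следует из леммы 5: если аддитивная группа кольца (в частности, конечно порожденного PI-кольца без кручения) конечно порождена, то кольцо финитно отделимо.

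Для необходимости основным инструментом будет теорема А. И. Ширшова о высоте (замечание 5). Пусть $K$ --- финитно отделимое конечно порожденное PI-кольцо без кручения. По теореме Ширшова существуют натуральное число $h$ и конечный набор элементов $a_1, \dots, a_n \in K$ таких, что каждый элемент кольца $K$ представляется как целочисленная линейная комбинация слов вида $w = a_{i_1}^{\alpha_1} a_{i_2}^{\alpha_2} \cdots a_{i_k}^{\alpha_k}$ с $k < h$.

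Ключевой шаг --- показать, что каждый $a_j$ --- целый алгебраический элемент. По лемме 4 целое кручение $\tau_{a_j}$ конечно и свободно от квадратов; значит, существует унитарный многочлен $f_j(x)$ с целыми коэффициентами и без свободного члена, такой что $\tau_{a_j} \cdot f_j(a_j) = 0$. Поскольку $K$ --- кольцо без кручения, отсюда заключаем $f_j(a_j) = 0$, то есть $a_j$ --- целый алгебраический некоторой степени $n_j$. Именно здесь существенно используется условие отсутствия кручения: без него мы имели бы лишь $\tau_{a_j} \cdot f_j(a_j) = 0$, а не $f_j(a_j) = 0$, и дальнейшая редукция степеней сорвалась бы.

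Завершающая часть: поскольку каждый $a_j$ целый алгебраический, любая его степень $a_j^{\alpha}$ при $\alpha \ge 1$ представима как $Z$-линейная комбинация элементов $a_j, a_j^2, \dots, a_j^{n_j - 1}$ (индукция по $\alpha$ с использованием равенства $f_j(a_j) = 0$). Применяя это разложение к каждому множителю слова $w = a_{i_1}^{\alpha_1} \cdots a_{i_k}^{\alpha_k}$ и раскрывая скобки, получаем, что $w$ есть $Z$-линейная комбинация <<редуцированных>> произведений $a_{i_1}^{\beta_1} \cdots a_{i_k}^{\beta_k}$ с $k < h$ и $1 \le \beta_l \le n_{i_l} - 1$. Множество таких редуцированных произведений конечно (как по длине, так и по ограничениям на показатели); следовательно, аддитивная группа кольца $K$ порождается этим конечным множеством.

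Основное препятствие, на мой взгляд, состоит не в технической стороне рассуждений, а именно в правильной сборке трёх ингредиентов: теоремы Ширшова (которая нужна, чтобы в некоммутативной PI-алгебре свести произвольные одночлены от образующих к словам ограниченной высоты), леммы 4 (дающей унитарный многочлен с точностью до целого множителя) и условия отсутствия кручения (позволяющего избавиться от этого множителя). Без PI-условия теорема Ширшова неприменима; без отсутствия кручения лемма 4 не доставляет целой алгебраичности, и показатели в словах Ширшова неограничены.
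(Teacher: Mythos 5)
Ваше рассуждение корректно и по существу совпадает с доказательством в статье: та же комбинация теоремы Ширшова о высоте, утверждения о целом кручении (вы ссылаетесь на лемму 4, статья --- на леммы 2--3, что дает то же соотношение $d\cdot f_j(a_j)=0$), условия отсутствия кручения для получения $f_j(a_j)=0$ и последующей редукции показателей в словах ограниченной высоты; достаточность в обоих случаях --- лемма 5. Различий в подходе нет.
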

\begin{proof}
 Пусть $K$  ---  конечно порожденное PI-кольцо без кручения. По теореме Ширшова о высоте найдутся натуральное число $h$ и элементы $a_{1}, a_{2}, \dots, a_{n}\in K$ такие, что любой элемент кольца $K$ представляется в виде линейной комбинации элементов вида\begin{equation*}
      w = a^{\alpha _{1}}_{i_{1}}a^{\alpha _{2}}_{i_{2}}\dots, a^{\alpha _{k}}_{i_{k}},
 \end{equation*} для некоторого натурального числа $k<h$ (число $k$ называют высотой слова $w$ над $a_{1}, a_{2}, \dots, a_{n}$).

Предположим, что $K$  --- финитно отделимое кольцо. По лемме 2 $d_{j}\cdot f_{j}\left({a_{j}}\right)=0$ для некоторого положительного целого числа $d_{i}$ и некоторого унитарного многочлена $f_{j}\left({x}\right)$ с целыми коэффициентами и без свободного члена. Поскольку $K$  --- кольцо без кручения, то можно заключить, что $f_{j}\left({a_{j}}\right)=0$, откуда следует, что $a^{\alpha _{j}}_{i_{j}}$ линейно выражается через $a^{\beta _{j}}_{i_{j}}$ где $\beta _{j}<n_{j}$ и $n_{j}$  --- степень многочлена $f_{j}\left({x}\right)$. Это означает, что любой элемент кольца $K$ представляется в виде линейной комбинации элементов вида \begin{equation*}
    w = a^{\beta _{1}}_{i_{1}}a^{\beta _{2}}_{i_{2}}\dots, a^{\beta _{k}}_{i_{k}},
\end{equation*} где $\beta _{j}<m$ и $m=\max\{n_{j} \mid j=1, 2\dots k\}$, $k<h$ Элементов $w$, указанного выше вида, конечное множество. Вывод: аддитивная группа кольца $K$ конечно порождена. Обратное утверждение есть лемма 5. Следствие теоремы доказано.
\end{proof}

\section{Заключение}

Полученные в настоящей работе результаты показывают, что ситуация со свойством финитной отделимости в кольцах значительно сложнее, чем в группах и полугруппах, даже для коммутативного случая. Из теоремы о разложении конечно порожденных абелевых групп в прямую сумму циклических групп следует, что любая конечно порожденная коммутативная группа  ---  финитно отделима. Алгоритмически проверяемые необходимые и достаточные условия финитной отделимости конечно порожденных коммутативных полугрупп описаны в работе автора в 1983 (см. [7]).  Вопрос же о необходимых и достаточных условиях финитной отделимости конечно порожденных коммутативных колец в общем виде открыт до сих пор. На трудности решения этой проблемы указывает тот факт, что, как показано в настоящей работе, кольцо многочленов от одной переменной над простым полем является финитно отделимым, а от двух и более переменных  ---  уже не является. И еще тот факт, что для моногенных колец решение этой проблемы оказалось нетривиальным. Одним из приложений полученного здесь решения является возможность алгоритмического решения проблемы вхождения элемента в подкольцо (не обязательно конечно порожденное) для финитно отделимых моногенных колец. А это --- нетривиальная диофантова задача в кольце многочленов с целыми коэффициентами.

\end{document}